\newtheorem{theorem}{Theorem}[section]
\newtheorem{proposition}[theorem]{Proposition}
\newtheorem{corollary}[theorem]{Corollary}
\newtheorem{definition}{Definition}
\def\ps@pprintTitle{%
 \let\@oddhead\@empty
 \let\@evenhead\@empty
 \def\@oddfoot{\centerline{\thepage}}%
 \let\@evenfoot\@oddfoot}
\begin{document}

\begin{frontmatter}
\title{Invariant measures for continued fraction algorithms with finitely many digits}
\author{Cor Kraaikamp}
\address{Delft University of Technology, EWI (DIAM), Mekelweg 4, 2628 CD Delft, the
Netherlands, c.kraaikamp@tudelft.nl} 
\author{Niels Langeveld}
\address{Mathematical Institute, Leiden University, P.O. Box 9512, 2300 RA Leiden, The Netherlands, n.d.s.langeveld.2@umail.leidenuniv.nl} 

\begin{abstract}
\noindent In this paper we consider continued fraction (CF) expansions on intervals different from $[0,1]$. For every $x$ in such interval we find a CF expansion with a finite number of possible digits. Using the natural extension, the density of the invariant measure is obtained in a number of examples.
In case this method does not work, a Gauss-Kuzmin-L\'evy based approximation method is used. Convergence of this method follows from~\cite{SW} but the speed of convergence remains unknown. For a lot of known densities the method gives a very good approximation in a low number of iterations.
Finally, a subfamily of the $N$-expansions is studied. In particular, the entropy as a function of a parameter $\alpha$ is estimated for $N=2$ and $N=36$.
Interesting behavior can be observed from numerical results.
\end{abstract}

\begin{keyword}
Continued fraction expansions \sep Gauss-Kuzmin-L\'evy \sep entropy \sep natural extension \sep invariant measure
\end{keyword}

\end{frontmatter}

\section{Introduction}

In general, studies on continued fraction expansions focus on expansions for which almost all $x$\footnote{All `almost all $x$' statements are wrt. Lebesgue measure.} have an expansion with digits from an infinite alphabet. A classical example is the regular continued fraction~\cite{DK1,IK,S}.
An example of continued fraction expansions with only finitely many digits has been introduced in~\cite{Le} by Joe Lehner, where the only possible digits are 1 and 2; see also~\cite{DK2}.
More recently, continued fractions have been investigated for which all $x$ in a certain interval have finitely many possible digits.
In~\cite{DKW} the following 4-expansion has been (briefly) studied.
Let $T:[1,2]\rightarrow[1,2]$ be defined as
\begin{equation}\label{4expansiondigit12}
 T(x) = \left\{
  \begin{array}{l l l}
  \dfrac{4}{x}-1 & \text{for} &x\in(\frac{4}{3},2] \\[0.75em]
  \dfrac{4}{x}-2 & \text{for} &x\in[1,\frac{4}{3}] \ . \\
\end{array} \right.
\end{equation}

\begin{figure}[h!]
\import{home/niels/4expansiondigits12.pic}{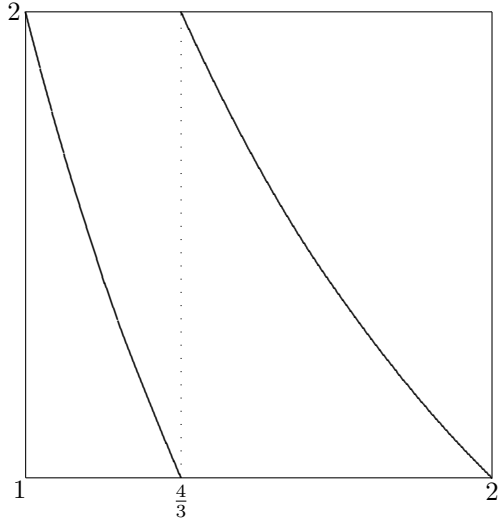}\caption{The CF-map $T$ from (\ref{4expansiondigit12}).}
\end{figure}

\noindent By repeatedly using this map we find that every $x\in[1,2]$ has an infinite continued fraction expansion of the form
\begin{equation}\label{eq:expansionofxdigits12}
 x= \frac{4}{\displaystyle d_1+\frac{4}{\displaystyle d_2+ \ddots }}\\
\end{equation}
\noindent with $d_n\in\{1,2\}$ for all $n\geq1$. The class of continued fractions algorithms that give rise to digits from a finite alphabet is very large. In this paper we will give examples of such expansions and in Section~\ref{sec:subfam} we will take a closer look at an interesting sub-family. Most of the examples will be a particular case of $N$-expansions (see~\cite{AW,BGK,DKW}). Other examples are closely related and can be found by combining the $N$-expansions with flipped expansions (cf.~\cite{L14} for 2-expansions; see also~\cite{DHMK} for flipped expansions).
For all these examples we refer to~\cite{DHMK} for ergodicity (which can be obtained in all these cases in a similar way) and existence of an invariant measure.
In a number of cases however, it is difficult to find the invariant measure explicitly, while in seemingly closely related cases it is very easy.
In case we cannot give an analytic expression for the invariant measure we will give an approximation using a method that is very suitable (from a computational point of view) for expansions with finitely many different digits.
This method is based on a Gauss-Kuzmin-L\'evy theorem. For greedy $N$-expansions this theorem is proved by Dan Lascu in~\cite{La}. The method yields smoother results than by simulating in the classical way (looking at the histogram of the orbit of a typical point as described in Choe's book~\cite{C3}, and used in his papers~\cite{C1,C2}).
We also give an example in which we do know the density and where we use this method to show its strength.
\vspace{1em}

\noindent In Section~\ref{sec:general} we will give the general form of the continued fraction maps we study in this paper. After that we give several examples of such maps and a way of finding the density of the invariant measure by using the natural extension.
In Section~\ref{sec:sim} we will see how we simulated the  densities in case we were not able to find them explicitly. In the last section we will consider a subfamily of the $N$-expansions which can be parameterized by $\alpha\in(0,\sqrt{N}-1]$.
We study the entropy as function of $\alpha$. We will do so mainly on a numerical basis. In the past decades, it turned out to be very interesting to look at entropy of a family of continued fractions as a function of a parameter. In~\cite{BCIT,CMPT,CT12,CT13,KSS2,LM,NN} the entropy of Nakada's $\alpha$-expansions is studied. For example in~\cite{NN} it is shown that in any neighborhood of 0 you can always find an interval on which the entropy function is increasing, an interval on which the entropy function is decreasing and an interval on which this function is constant.
In~\cite{CT12} it is shown that there is a countable set of open intervals on which the entropy function is monotonic. The union of these intervals has Lebesgue measure 1. As matching plays an important role in these papers we will take a glimpse at how matching works for our subfamily (see Section~\ref{sec:subfam} for the definition and use of matching).

\section{The general form of our maps}\label{sec:general}
Throughout this paper we will look at continued fraction algorithms of the following form.
Fix an integer $N\geq2$ and let $[a,b]$ be a subinterval of $[0,N]$ with $b-a\geq 1$. Let $T:[a,b]\rightarrow[a,b]$ be defined as
$$
T(x)=\frac{\varepsilon(x)N}{x}-\varepsilon(x)d(x)
$$
where $\varepsilon(x)$ is either $-1$ or $1$ depending on $x$ and $d(x)$ is a positive integer such that $T(x)\in[a,b]$. Note that if $b-a=1$ then there is exactly one positive integer such that $T(x)\in[a,b)$ if $\varepsilon(x)$ is fixed. For $N=2$ we find the family that is studied in~\cite{L14} and for $\varepsilon(x)=1$ for all $x$ we find the $N$-expansions from~\cite{DKW}.
Whenever $a>0$ this map can only have finitely many different digits. This family is closely related to the  $(a,b)$-continued fractions introduced and studied by Svetlana Katok and Ilie Ugarcovici in~\cite{KU0,KU1,KU2}. For $(a,b)$-continued fraction $\varepsilon(x)=-1$ for all $x\in[a,b]$ and  $N=1$. Also there are restrictions on $a,b$. These are chosen such that $a\leq0\leq b, \  b-a\geq1, \  -ab\leq1$.
The case $b-a=1$ was further studied by Carlo Carminati, Stefano Isola and Giulio Tiozzo in~\cite{CIT}, where they study the entropy as a function of $\alpha$.

\vspace{1em}

\noindent Note that this family is rather `large'. For the examples in the next section $\varepsilon(x)$ will be plus or minus one on fixed interval(s). In Section~\ref{sec:subfam} other restrictions are imposed.
\subsection{Two seemingly closely related examples and their natural extension}

In~\cite{DKW}, using the natural extension the invariant measure of the 4-expansion map $T$ given in (\ref{4expansiondigit12}) was easily obtained. To (briefly) illustrate the method and the kind of continued fraction algorithms we are interested in we consider a slight variation of this continued fraction.
Let $\tilde{T}:[1,2]\rightarrow[1,2]$ be defined as
\begin{equation}\label{4expansiondigits15}
 \tilde{T}(x) = \left\{
  \begin{array}{l l l}
  \dfrac{4}{x}-1 & \text{for} & x\in(\frac{4}{3},2] \\[0.75em]
  5-\dfrac{4}{x} & \text{for} & x\in[1,\frac{4}{3}] \ ,  \\
\end{array} \right.
\end{equation}
i.e. we `flipped' the map $T$ on the interval $[1,\frac{4}{3}]$; see also Figure~\ref{fig:4expdigit15}.
\begin{figure}[h!]
\import{home/niels/4expansiondigits15.pic}{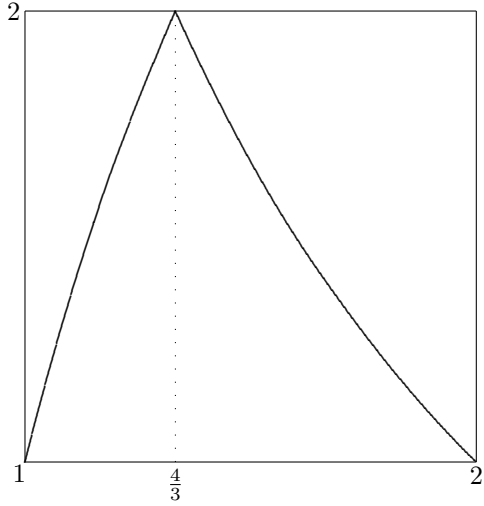}\caption{The CF map $\tilde{T}$ from (\ref{4expansiondigits15}). }\label{fig:4expdigit15}
\end{figure}
\newline Setting
$$
 \varepsilon_1(x) = \left\{
  \begin{array}{l l}
  1 & \text{for $x\in (\frac{4}{3},2]$}\\
 -1 & \text{for $x\in [1,\frac{4}{3}]$}\\
  \end{array} \right.
\quad \text{and } \quad d_1(x) = \left\{
  \begin{array}{l l}
    1 & \quad \text{for $x\in(\frac{4}{3},2]$}\\
    5 & \quad \text{for $x\in [1,\frac{4}{3}]$}\ ,\\
  \end{array} \right.
$$
we define $\varepsilon_n(x)=\varepsilon_1\left(T^{n-1}(x)\right)$ and $d_n(x)=d_1\left(T^{n-1}(x)\right)$. From $T(x)=\varepsilon_1\cdot\left(\frac{4}{x}-d_1\right)$, it follows that

\begin{equation}
 x = \frac{4}{d_1+\varepsilon_1 T(x)} =\ldots    = \frac{4}{\displaystyle d_1+\frac{4 \,\varepsilon_1}{\displaystyle d_2+ \ddots + \frac{4 \,\varepsilon_{n-1}}{\displaystyle d_n +\varepsilon_n T^n(x)}}} \ .
\end{equation}

\noindent Taking finite truncations, we find the so called convergents
$$
c_n=\frac{p_n}{q_n}=\frac{4}{\displaystyle d_1+\frac{4 \,\varepsilon_1}{\displaystyle d_2+ \ddots + \frac{4 \,\varepsilon_{n-1}}{\displaystyle d_n}}}
$$
of $x$. One can show that $\lim_{n\rightarrow\infty}c_n=x$; see~\cite{L14} for further details. Therefore we write
\begin{equation}\label{eq:expansionofx}
 x= \frac{4}{\displaystyle d_1+\frac{4 \,\varepsilon_1}{\displaystyle d_2+ \ddots }}\\ \ ,
\end{equation}
or in short hand notation $x=[4/d_1,4\varepsilon_1/d_2,\ldots]$ or $x=[d_1,\varepsilon_1/d_2,\ldots]_4$.

\subsubsection{Using the natural extension to find the invariant measure}\label{sec:natex}
To demonstrate the method we will use $\tilde{T}$ from (\ref{4expansiondigits15}).
The idea now is to build a two-dimensional system (the natural extension) $\left(\Omega=[1,2]\times[A,B],\mathcal{T}\right)$ which is almost surely invertible and contains $([1,2],\tilde{T})$ as a factor.
In~\cite{DKW} it was shown that a suitable candidate for the natural extension map $\mathcal{T}$ is given by
$$
\mathcal{T}(x,y)=\left(\tilde{T}(x),\frac{4\varepsilon_1(x)}{d_1(x)+y}\right) \ .
$$
\begin{figure}[h!]
\import{home/niels/natexN4digits15.pic}{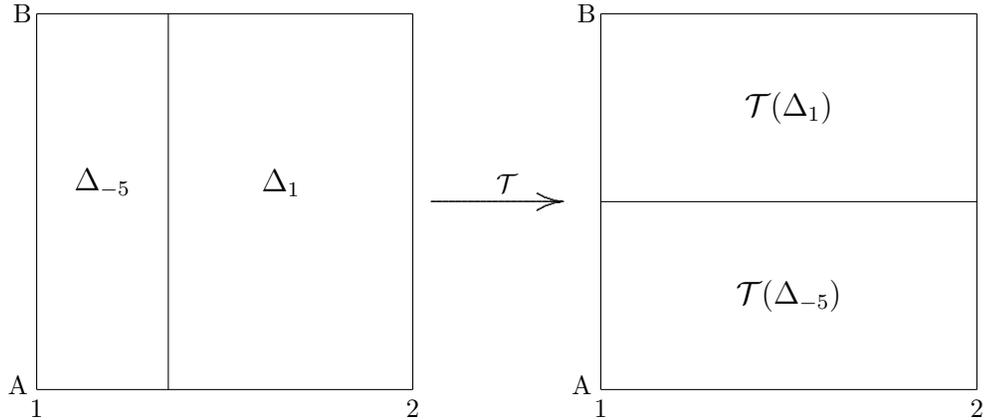}\caption{The suitable domain for $\mathcal{T}$.}\label{fig:4natexpdigit15}
\end{figure}
\noindent Now we choose $A$ and $B$ in such a way the system is indeed (almost surely) invertible. We define fundamental intervals $\Delta_{n}=\{(x,y)\in\Omega :d_1(x)=n\}$ if $\varepsilon=1$ and $\Delta_{-n}=\{(x,y)\in\Omega :d_1(x)=n\}$ if $\varepsilon=-1$.
When the fundamental intervals fit exactly under the action of $\mathcal{T}$, the system is almost surely invertible; see Figure~\ref{fig:4natexpdigit15}. An easy calculation shows that $A=-1$ and $B=\infty$ is the right choice here.
It is shown in~\cite{DKW}, that the density of the invariant measure (for the $2$-dimensional system) is given by
$$
f(x,y)=C\frac{4}{(4+xy)^2} , \quad \text{for }  (x,y)\in\Omega \ ,
$$
where $C$ is a normalizing constant (which is $\frac{1}{\log(3)}$ in this example). Projecting on the first coordinate yields the invariant measure for the $1$-dimensional system $([1,2],\tilde{T})$, with density
$$
\frac{1}{\log(3)}\left(\frac{1}{x}+\frac{1}{4-x}\right) , \quad \text{for } x\in[1,2] \ .
$$

\noindent Note that if we would consider the map
\begin{equation}\label{4expansiondigits24}
 \hat{T}(x) = \left\{
  \begin{array}{l l l}
  4-\dfrac{4}{x} & \text{for }  x\in(\frac{4}{3},2] \\[0.75em]
  \dfrac{4}{x}-2 & \text{for }  x\in[1,\frac{4}{3}] \ , \\
\end{array} \right.
\end{equation}

\begin{figure}[h!]
\import{home/niels/4expansiondigits24.pic}{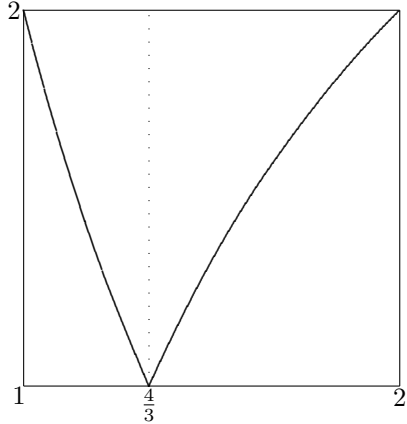}\caption{The CF map $\hat{T}$ from (\ref{4expansiondigits24}).}\label{fig:4expansiondigits24}
\end{figure}
\noindent which is a `flipped version' of the map $T$ from (\ref{4expansiondigit12}) where we flipped the branch on the interval $(\frac{4}{3},2]$, we get another continued fraction of the form (\ref{eq:expansionofx}) but now with digits $d_n\in\{2,4\}$; see Figure~\ref{fig:4expansiondigits24}. Our approach would give $A=-2$ and $B=\infty$ which shows that the underlying dynamical system has a $\sigma$-finite infinite measure with `density' $f(x)$, given by
$$
f(x)=\frac{1}{x}+\frac{1}{2-x}, \quad \text{for } x\in[1,2].
$$

\noindent The method from~\cite{DKW} we just used does not always `work'. As an example we will use an expansion given in~\cite{L14}.
Let $\bar{T}(x)$ be defined as
\begin{equation}\label{eq:tiny}
 \bar{T}(x) = \left\{
  \begin{array}{l l l}
  \dfrac{2}{x}-3 & \text{for} & x\in(\frac{1}{2},\frac{4}{7}] \\[0.75em]
  4-\dfrac{2}{x} & \text{for} & x\in(\frac{4}{7},\frac{2}{3}] \\[0.75em]
  \dfrac{2}{x}-2 & \text{for} & x\in(\frac{2}{3},\frac{4}{5}] \\[0.75em]
  3-\dfrac{2}{x} & \text{for} & x\in[\frac{4}{5},1] \ , \\
\end{array} \right.
\end{equation}
see Figure~\ref{fig:atractor}.

\begin{figure}[h!]
\import{home/niels/atractor.pic}{atractor.pic}
\end{figure}

\noindent When trying to construct the domain of the natural extension one quickly notices that `holes' appear. This is not an entirely new phenomena in continued fractions, it also appears in constructing the natural extension of Nakada's $\alpha$-expansions when $\alpha\in(0,\sqrt{2}-1)$; see~\cite{LM}.
One might hope that there are finitely many holes, but a simulation of the domain indicates otherwise; see Figure~\ref{fig:atractor12to1domain}.
\begin{figure}[h!]
\includegraphics[trim= 1cm 10cm 4cm 14cm, width=95mm]{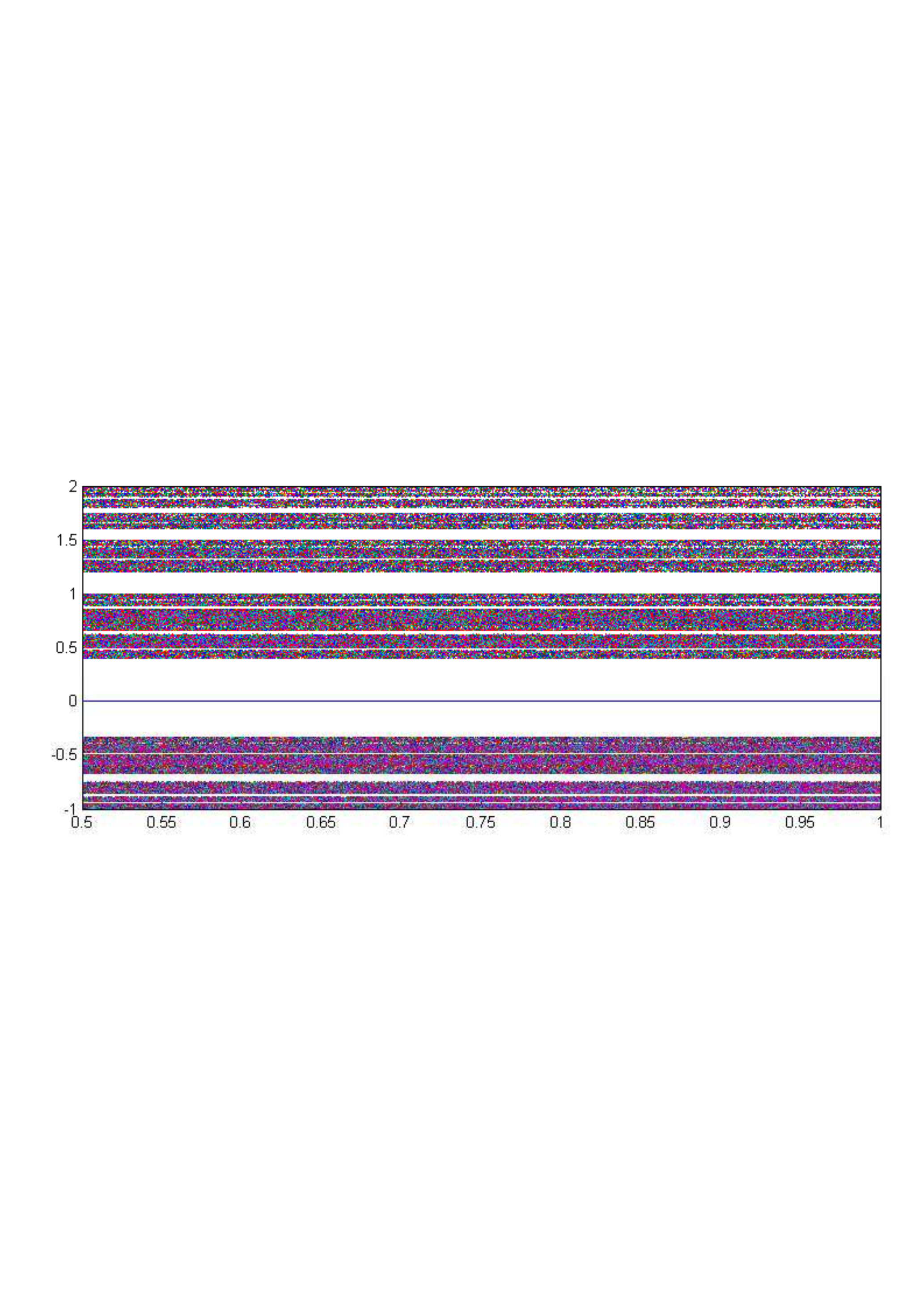}\caption{A simulation of the domain of the natural extension for the map $\bar{T}$ from (\ref{eq:tiny}).}\label{fig:atractor12to1domain}
\end{figure}

\noindent Although the method might still work in this case, it does not really seem to help us to find a description of the invariant density. In order to get an idea of the density we will use two different approaches. One will be based on the Gauss-Kuzmin-L\'evy Theorem. The other will be a more classical approach based on Choe's book~\cite{C3}.

\subsection{Two different ways of approximating the density}\label{sec:sim}
The first way is based on the Gauss-Kuzmin-L\'evy theorem. This theorem states that for the regular continued fraction the Lebesgue measure of the pre-images of a measurable set $A$ will converge to the Gauss measure.
$$
\lambda\left(T^{-n}(A)\right)\rightarrow\mu(A) \quad \text{as } n\rightarrow\infty.
$$
This was stated as an hypothesis by Gauss in his mathematical diary in 1800 and proved by Kuzmin in 1928 who also obtained a bound on the speed of convergence. Independently, L\'evy proved the same theorem in 1929 but found a sharper bound for the speed of convergence namely
$|\lambda\left(T^{-n}(A)\right)-\mu(A)|=\mathcal{O}(q^n)$ with $0<q<1$ instead of $\mathcal{O}(q^{\sqrt{n}})$ which is the bound Kuzmin found.
There are many proofs of this theorem and refinements on the speed of convergence; see e.g. Khinchine's book~\cite{IK}, or~\cite{K} for various refinements.
\newline The idea for our method is to look at the pre-images of $[\frac{1}{2},z]$ for our map $\bar{T}$ from (\ref{eq:tiny}) and take the Lebesgue measure of the intervals found. Note that in the first iteration you will find 2 intervals and after the first iteration the number of intervals found is multiplied by 4.
Also the size of the intervals shrink relatively fast. Fortunately it seems that a low number of iterates (around 10) is already enough to give a good approximation; see Figure~\ref{fig:gausskuzdigits12} where the theoretical density and its approximation are displayed and
 figure~\ref{fig:densatractorplusgk}, where both methods of approximating are compared for a density we do not know the theoretical density.

\vspace{1em}

\noindent The other way of finding an approximation is by iterating points and look at the histogram of the orbits. The way we iterated is that we used a lot of points and iterated them just a few times. To be more precise we iterated 2500 uniformly random points 20 times, repeated this process 400 times and took the average density of all points. Then we redid the process but instead of sampling uniformly we sampled from the previously found density (see also~\cite{DKL})
In Figure~\ref{fig:densatractorplusgk} we see both methods applied to our example.

\begin{figure}[h!]
\includegraphics[trim= 3cm 0cm 0cm 0cm, width=120mm]{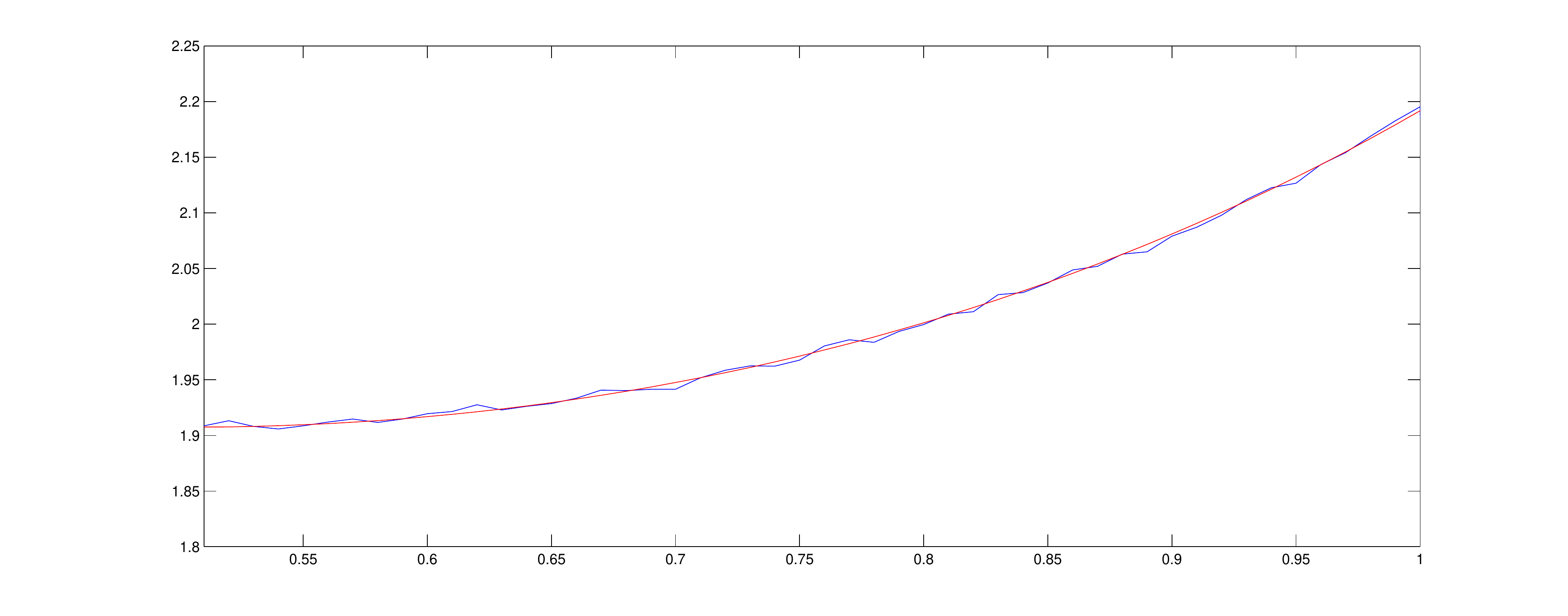}\caption{Approximations of the density of the invariant measure of $\bar{T}(x)$ using the Gauss-Kuzmin-L\'evy method (red) and the classical way (blue).}\label{fig:densatractorplusgk}
\end{figure}

\noindent The two methods give results that are relatively close but the approximation found with the Gauss-Kuzmin-L\'evy method is far more smooth. Since we do not know the density we cannot compare the theoretical density with the approximation. Since the Gauss-Kuzmin-L\'evy method is the new method we will look how well this method performs in an example in which we know the invariant density explicitly.
For the map $T$ from (\ref{4expansiondigit12}) we know the density which was given in~\cite{DKW}. In Figure~\ref{fig:gausskuzdigits12} we see a plot of both the theoretical density and the approximation found by the Gauss-Kuzmin-L\'evy method.
\begin{figure}[h!]
\includegraphics[trim =3cm 0cm 0cm 0cm, width=125mm]{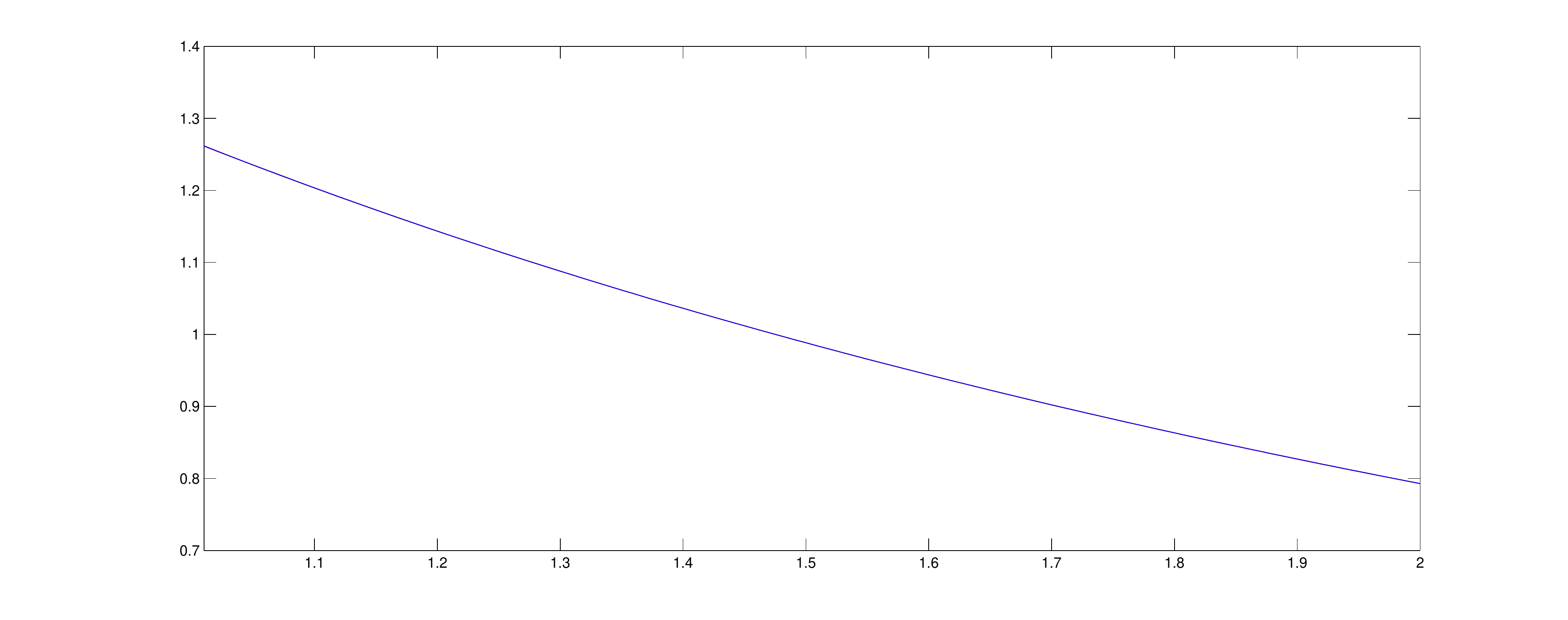}\caption{An approximation of the true density for the CF map $T$ from (\ref{4expansiondigit12}) and the true $T$-invariant density.}\label{fig:gausskuzdigits12}
\end{figure}

\noindent The difference can barely be seen by the naked eye. If we look at the difference in 2-norm we get
 $$\left(\int_1^2 (f(x)-\hat{f}(x))^2 \, dx\right)^{\frac{1}{2}}=1.1235*10^{-5}$$
where $f(x)$ is the true density and $\hat{f}(x)$ the approximation.

\section{A sub-family of the $N$-expansions}\label{sec:subfam}
In this section we study a subfamily of the $N$-expansions (so $\varepsilon(x)=1$ for all $x$ in the domain) with digits from a finite alphabet and an interval $[\alpha,\beta]$ as domain. For our subfamily we want that it has finitely many digits. Furthermore we would like that there is a unique digit such that $T(x)\in[\alpha,\beta)$. This results in $\alpha>0$ and $\beta-\alpha=1$.
In this way the map is uniquely determined by the domain. With these restrictions we define for $\alpha\in(0,\sqrt{N}-1]$ the map $T_{\alpha,N}:[\alpha,\alpha+1]\rightarrow[\alpha,\alpha+1]$ as
$$
T_{\alpha,N}=\frac{N}{x}-\left\lfloor \frac{N}{x}-\alpha\right\rfloor \ .
$$
Note that for all these expansions we have a finite number of digits since $\alpha>0$. Also note that this is the largest range in which we can choose $\alpha$ because for $\alpha>\sqrt{N}-1$ the digit would be 0 or less (see Section~\ref{sec:admis} for a calculation).
Simulations show that a lot of maps have an attractor smaller than $[\alpha,\beta]$. When $N\geq9$ we find that if $\alpha=\sqrt{N}-1$ there is always an interval $[c,d]\subsetneq[\alpha,\alpha+1]$ for which the $T_{\alpha,N}$-invariant measure of $[c,d]$ is zero. Whenever $N>4$ we have that
$T_{\alpha,N}$ always has 2 branches for $\alpha=\sqrt{N}-1$. Calculations of these observations are given in Section~\ref{sec:admis} in which we take a closer look on which sequences are admissible for a given $N$ and $\alpha$. In Section~\ref{sec:entropy} we study the behavior of the entropy as a function of $\alpha$ for a fixed $N$.
\vspace{1em}

\noindent The examples in~\cite{DKW} with `fixed range' are all member of this kind of sub-family of the $N$-expansions. Though, these examples are cases for which all the branches of the mapping are full. In such case the natural extension can be easily build using the method described previously.
If not all branches are full we can still make the natural extension in some cases. We will start this section with such case.

\subsection{A 2-expansion with $\alpha=\sqrt{2}-1$}\label{sec:N2asqrt2min1}

Let $T(x):[\sqrt{2}-1,\sqrt{2}]\rightarrow[\sqrt{2}-1,\sqrt{2}]$ be defined by
$$
 T(x) = \left\{
  \begin{array}{l l l }
  \dfrac{2}{x}-1 & \text{for} & 2(\sqrt{2}-1) <  x \leq \sqrt{2} \\[0.75em]
  \dfrac{2}{x}-2 & \text{for} & 2-\sqrt{2}  <  x \leq 2(\sqrt{2}-1) \\[0.75em]
  \dfrac{2}{x}-3 & \text{for} & \frac{1}{7}(6-2\sqrt{2})<  x \leq 2-\sqrt{2}    \\[0.75em]
  \dfrac{2}{x}-4 & \text{for} & \sqrt{2}-1 \leq  x \leq \frac{1}{7}(6-2\sqrt{2}) \ . \\
  \end{array} \right.
$$

\begin{figure}[h!]
\import{home/niels/2expansionatractor.pic}{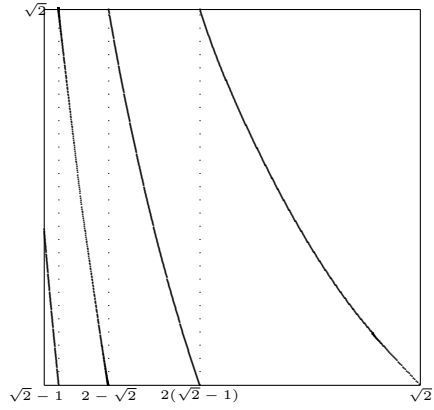}\caption[triangle]{A 2-expansion on the interval $[\sqrt{2}-1,\sqrt{2}]$.} \label{fig:2expansionatractor1}
\end{figure}

\noindent A graph of this map is shown in Figure~\ref{fig:2expansionatractor1}. We can find the invariant measure for this map by using the method as in Section~\ref{sec:natex} though we now need to determine 3 `heights' in order to make the mapping of the natural extension almost surely bijective on the domain (see Figure~\ref{fig:nnatex2expansionatractor}).
We get the following equations for the heights $A, B$ and $C$:
$$A=\frac{2}{4+C} \  , \quad B=\frac{2}{3+C} \quad \text{and } C=\frac{2}{1+B} \ .$$
\begin{figure}[h]
\import{home/niels/natex2expansionatractor.pic}{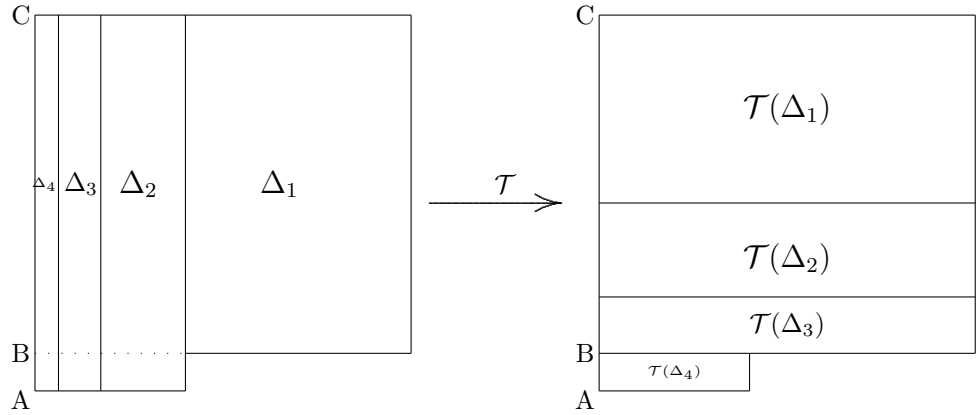}\caption{$\Omega$ and $\mathcal{T}(\Omega)$.}\label{fig:nnatex2expansionatractor}
\end{figure}
\newpage
\noindent This results in $A=\frac{1}{2}(\sqrt{33}-5)$, $B=\frac{1}{6}(\sqrt{33}-3)$ and $C=\frac{1}{2}(\sqrt{33}-3)$.
We find the following invariant density up to a normalizing constant (which is given in Theorem~\ref{thm:natexonint})
$$
f(x)= \left\{
  \begin{array}{l l}
   \frac{\sqrt{33}-3}{4+(\sqrt{33}-3)x}-\frac{\sqrt{33}-5}{4+(\sqrt{33}-5)x} & \text{for } \sqrt{2}-1 <  x \leq 2(\sqrt{2}-1) \\[0.75em]
   \frac{\sqrt{33}-3}{4+(\sqrt{33}-3)x}-\frac{\sqrt{33}-3}{12+(\sqrt{33}-3)x} & \text{for } 2(\sqrt{2}-1) <  x \leq \sqrt{2} \ . \\
  \end{array}\right.
$$
\noindent The graph of the density is given in Figure~\ref{fig:dens2expansionpart1}.
\begin{figure}[h!]
\includegraphics[trim= 1cm 8.5cm 1cm 8cm, width=95mm]{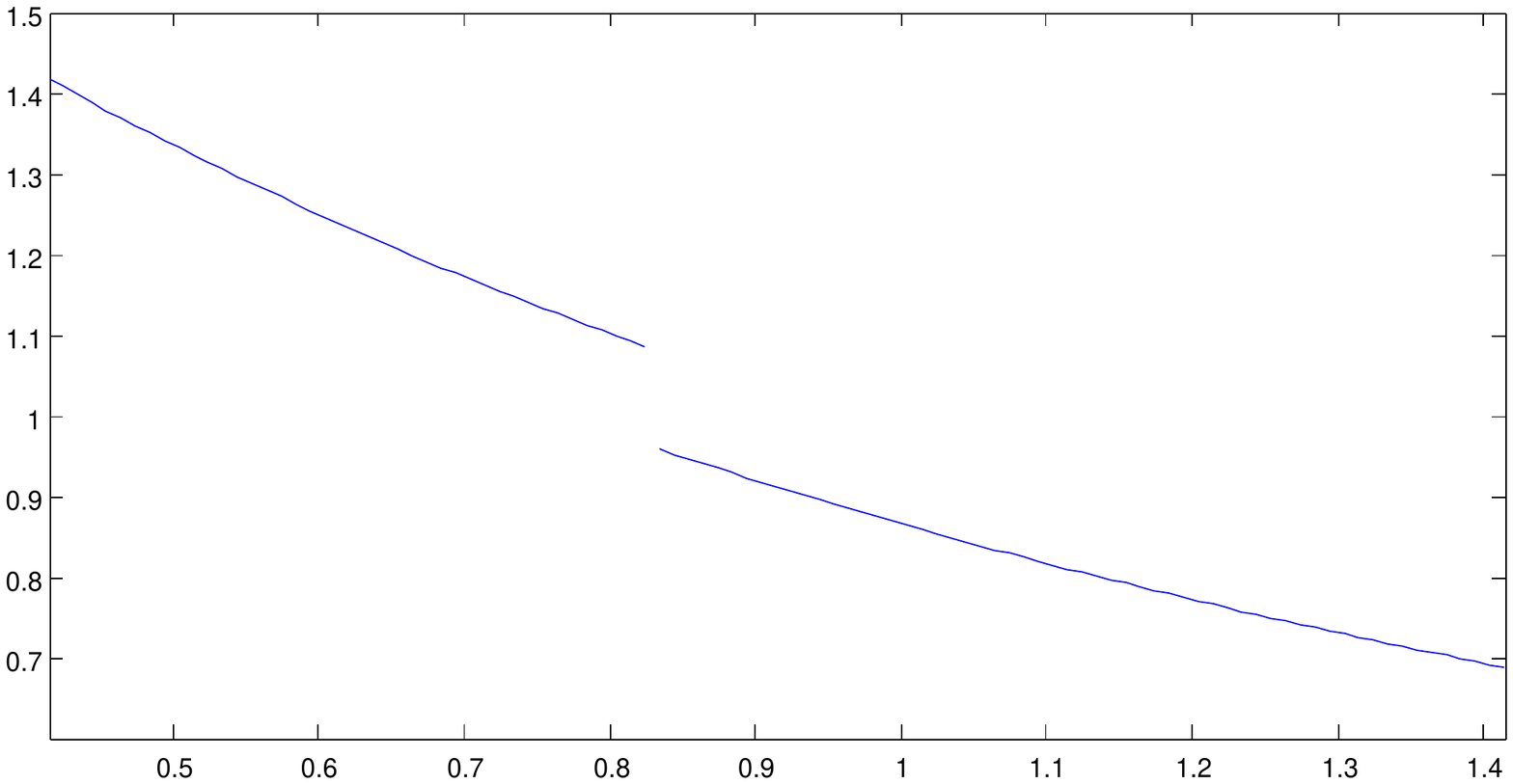}\caption{The density of the invariant measure for the 2-expansion on $[\sqrt{2}-1,\sqrt{2}]$.}\label{fig:dens2expansionpart1}
\end{figure}
\noindent In this case we were lucky. But in general it seems to be very hard to construct the natural extension explicitly. Still we can simulate the densities and calculate the entropy for a given $\alpha$. Also for the 2-expansions we can extend the above result to all $\alpha\in [\frac{\sqrt{33}-5}{2},\sqrt{2}-1]$; see Theorem~\ref{thm:matching on last interval}

\subsection{Admissibility}\label{sec:admis}
In this section we look at how the alphabet is determined by $\alpha$ for a fixed $N$. It turns out that not all different sequences of such alphabet will occur in a continued fraction expansion (or only finitely many times). This is a consequence of some cylinders having zero mass.
The range of the first digits of a continued fraction for given $\alpha$ and $N$ are easily described since the smallest digit will be attained by the right end point of the domain largest digits will be attained by the left end point of the domain. Let
$$n_{min}=\left\lfloor \frac{N}{\alpha+1}-\alpha\right\rfloor \quad \text{and} \quad n_{max}=\left\lfloor \frac{N}{\alpha}-\alpha\right\rfloor \ .$$
Note that $n_{min}\leq 0$ when $\alpha>\sqrt{N}-1$ and therefore $\alpha=\sqrt{N}-1$ is the largest value for which we have positive digits. Furthermore the alphabet is given by $\{n_{min},\ldots,n_{max}\}$.
Now to see for which $N$ we have that for $\alpha=\sqrt{N}-1$ there are two branches we must check that $n_{max}=2$. This happens when $\frac{N}{\sqrt{N}-1}-2\in[\sqrt{N}-1,\sqrt{N}]$. We find 2 inequalities
\begin{equation}\label{eq:2branch1}
 \frac{N}{\sqrt{N}-1}-2\leq \sqrt{N}
\end{equation}
and
\begin{equation}\label{eq:2branch2}
 \sqrt{N}-1\leq\frac{N}{\sqrt{N}-1}-2 \ .
\end{equation}
Inequality (\ref{eq:2branch1}) gives $4\leq N$ and inequality (\ref{eq:2branch2}) gives $N-1\leq N$. This yields that for all $N\geq 4$ we have two branches for $\alpha=\sqrt{N}-1$.
If $N\geq 9$ we also have an attractor which is strictly smaller than the entire interval for $\alpha=\sqrt{N}-1$ as the following calculation shows
\begin{eqnarray*}
 T_{\alpha,N}\left([\alpha,\frac{N}{\alpha+2}]\right)&=&\left[\alpha,\frac{N}{\alpha}-2\right]\\
 T_{\alpha,N}\left([\alpha,\frac{N}{\alpha}-2]\right)&=&\left[\alpha,\frac{N}{\alpha}-2]\cup[\frac{N\alpha}{N-2\alpha}-1,\alpha+1\right]\\
 T_{\alpha,N}\left([\frac{N\alpha}{N-2\alpha}-1,\alpha+1]\right)&=&\left[\alpha,\frac{N^2-(1-3\alpha)N-2\alpha}{(\alpha-1)N+2\alpha}\right] \ .\\
\end{eqnarray*}
If we substitute $\alpha$ with $\sqrt{N}-1$ and the following two inequalities hold we find an attractor strict smaller than the interval $[\alpha,\alpha+1]$;
\begin{equation}
 \frac{N}{\sqrt{N}-1}-2<\frac{N(\sqrt{N}-1)}{N-2(\sqrt{N}-1)}-1
\end{equation}
\begin{equation}
\frac{N^2-(4-3\sqrt{N})N-2(\sqrt{N}-1)}{(\sqrt{N}-2)N+2(\sqrt{N}-1)} <\frac{N}{\sqrt{N-1}}-2 \ ,
\end{equation}
yielding that $N\geq9$.
We take a closer look at $N=9$ and $\alpha=\sqrt{9}-1=2$. This example is briefly discussed in~\cite{DKW} where it is stated that computer experiments suggest that the orbit of 2 never becomes periodic and therefore it is hard to find the natural extension explicitly.
However, when simulating the natural extension, it seems that there are finitely many discontinuities; see Figure \ref{pic:natexN9a2}.
\begin{figure}[h!]
\includegraphics[trim= 1cm 9cm 0cm 8cm, width=100mm]{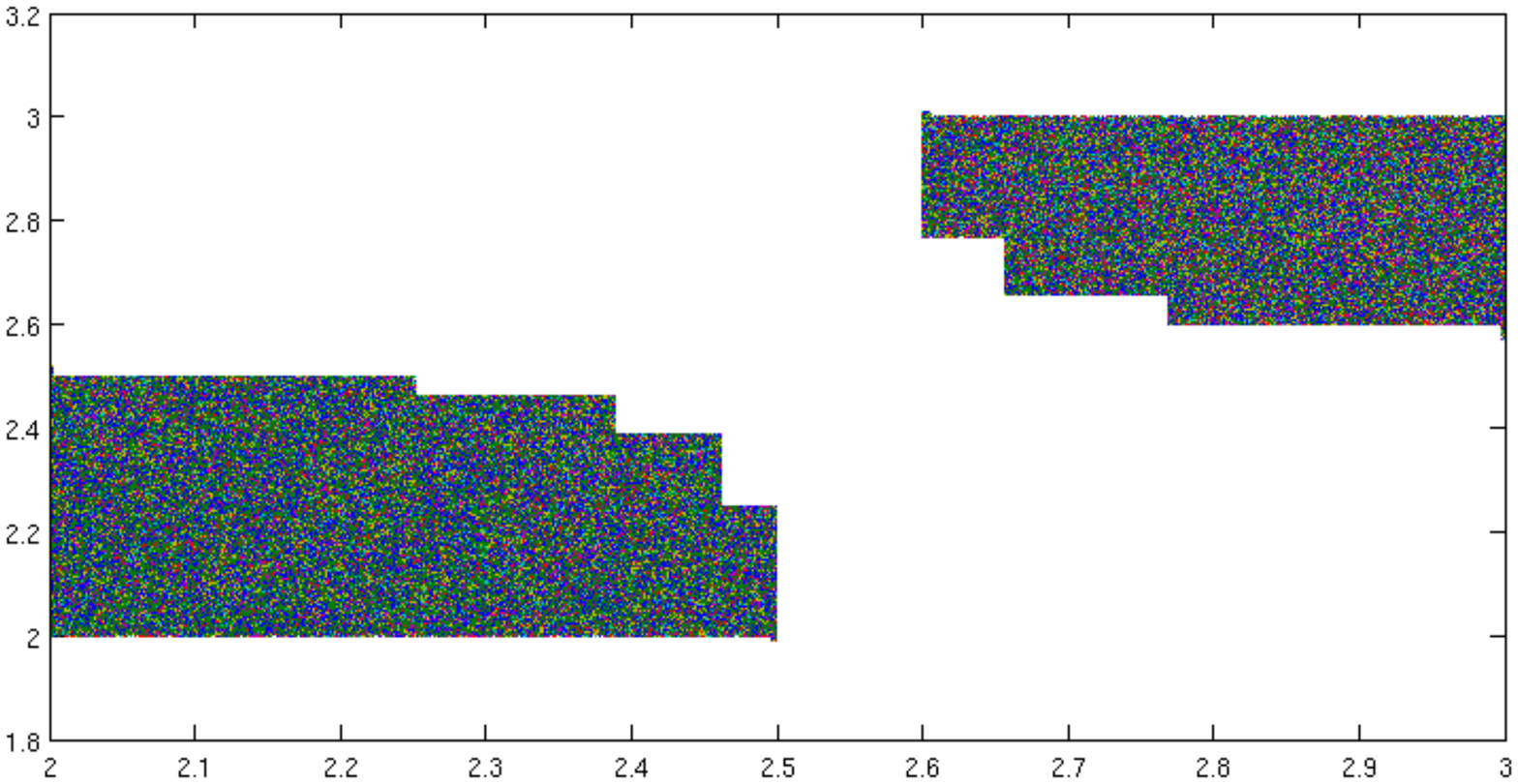}\caption{A simulation of the natural extension for $N=9$ and $\alpha=2$.}\label{pic:natexN9a2}
\end{figure}

We can also simulate the density of the invariant measure; see Figure~\ref{pic:densN9a2}.
\begin{figure}[h!]
\includegraphics[trim= 1cm 8cm 1cm 7cm, width=90mm]{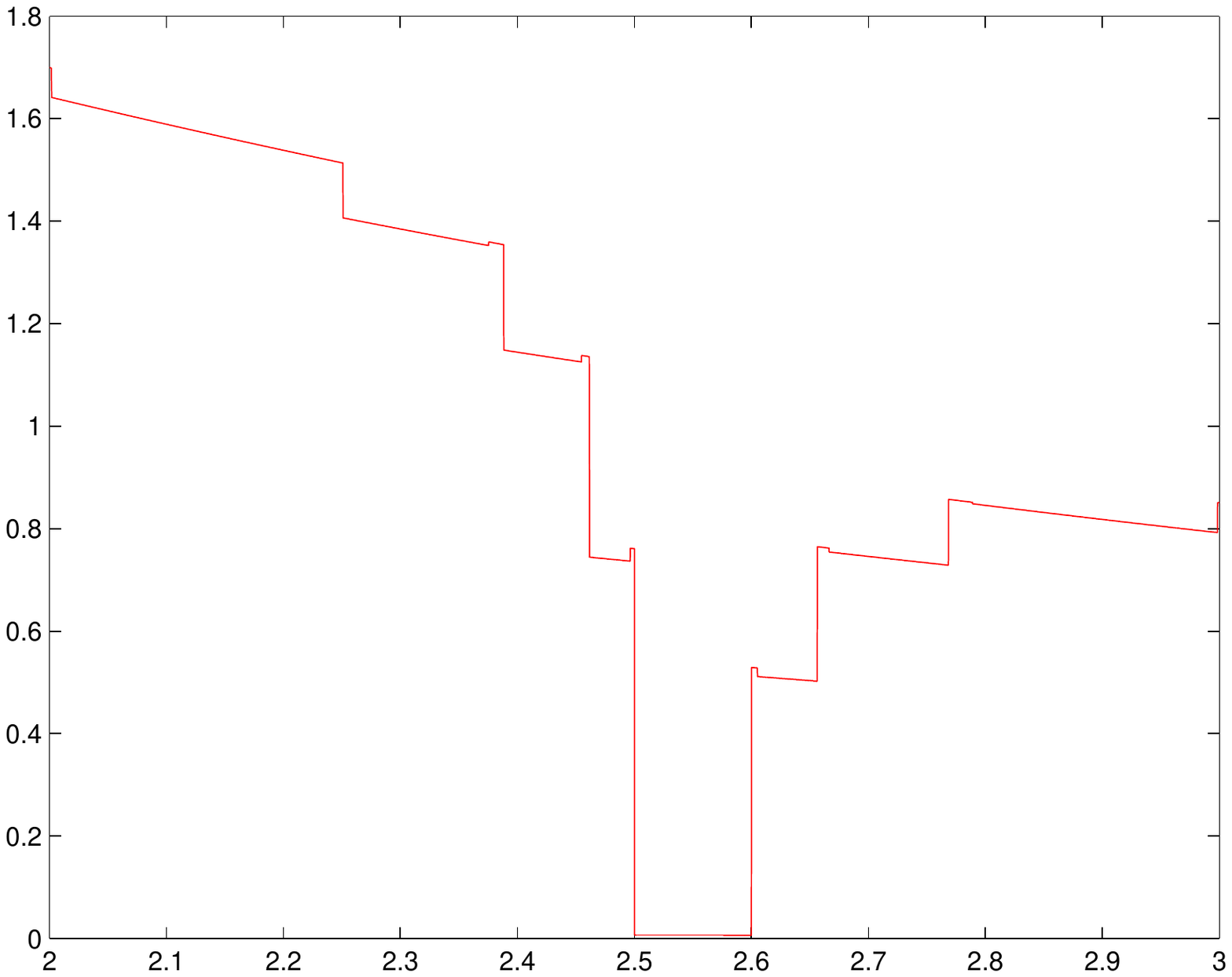}\caption{A Simulation of the density for $N=9$ and $\alpha=2$ using the Gauss-Kuzmin-L\'evy method.}\label{pic:densN9a2}
\end{figure}

\noindent Remark that cylinders with zero mass tells us which sequences are not apparent in any continued fraction of numbers outside the attractor and for those numbers not in the attractor these sequences only appear in the start of the continued fraction.
We can describe which cylinders these are. The hole is given by $[2.5,2.6]$. Now $2.5=[1,1,1,1,1,1,1,1,2,1,1,1,1,1,1,2,2,1,1,1,\ldots]_9$ and $2.6=[1,1,1,1,1,1,1,2,1,1,1,1,1,1,2,2,1,1,1,1\ldots]_9$.
The boundary of a cylinder $\Delta(a_1,\ldots,a_n)$ is given by $[a_1,\ldots,a_n,1,r]_9$ and $[a_1,\ldots,a_n,2,r]_9$ where $r$ is the expansion of $2$. Now a cylinder is contained in $[2.5,2.6]$ if
$2.5<[a_1,\ldots,a_n,1,r]_9<2.6$ and $2.5<[a_1,\ldots,a_n,2,r]_9<2.6$. Note that here we can have a clear description of the attractor and therefore for the admissible sequences. Simulation shows us that there are a lot of different settings in which you find an attractor strictly smaller than the interval.
In Figure~\ref{pic:atractors} simulations for several values of $N$ are shown. On the $y$-axes $\alpha$ is given and on the $x$-axes the attractor is plotted. For example, for $N=9$ we see that for $\alpha=1$ there is no attractor strictly smaller than the interval. There is an attractor for $\alpha=1.8$ and also for example for $\alpha=2$.
The pattern seems to be rather regular. Moreover, more `holes' seem to appear for large $N$ and large $\alpha$.
\begin{figure}[h!]
\centering
\includegraphics[width=55mm]{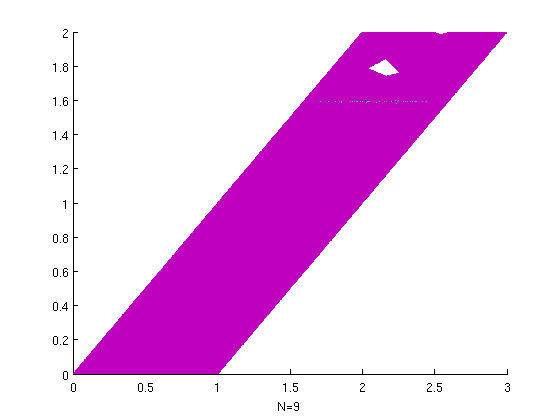}
\includegraphics[width=55mm]{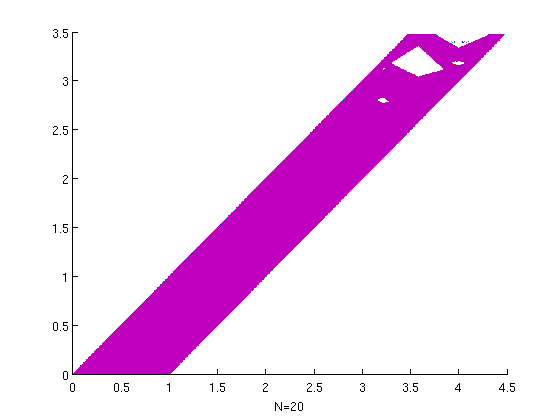}
\includegraphics[width=55mm]{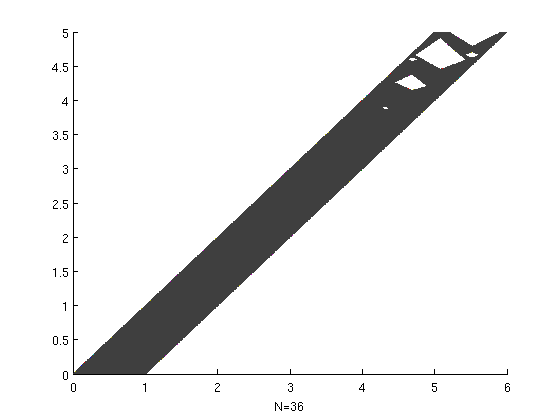}
\includegraphics[width=55mm]{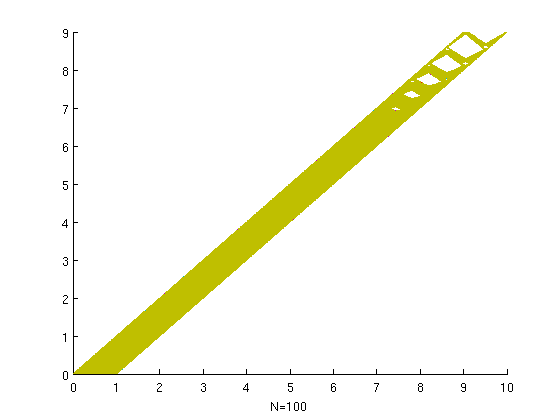}
\caption{Attractors plotted for several values of $N$.}\label{pic:atractors}
\end{figure}

\subsection{Entropy}\label{sec:entropy}

Entropy for a dynamical system has been introduced by Kolmogorov and Sinai in 1959. For a dynamical system related to continued fraction expansions it gives information about the speed of convergence for a typical point in that system.
Entropy as a function of a parameter $\alpha$ is widely studied for Nakada's $\alpha$-expansions (see~\cite{CMPT,CT12,CT13,LM,NN}). Also for the $(a,b)$-continued fractions the entropy is studied~\cite{CIT}.
A special interest goes to monotonicity which can be proven (for both cases) by using matching. It is shown that matching implies monotonicity and that matching holds almost everywhere.
In this section we look at entropy as a function of $\alpha\in(0,\sqrt{N}-1]$ for a fixed $N$. For our family it we do not know whether matching holds almost everywhere. In fact, it is not even clear whether matching occurs for all $N$. Also, if matching implies monotonicity is not clear. Certain conditions used to prove it in the case of Nakada $\alpha$-expansions do not hold for our family and therefore we cannot mimic the proof of~\cite{CT12}.
Let us first give the definition of matching.
\begin{definition}[Matching]\label{def:match}
 We say matching holds for $\alpha$ if there is an $K,M$ such that $T_{N,\alpha}^{K}(\alpha+1)=T_{N,\alpha}^M(\alpha)$. The numbers $K,M$ are called the matching exponents, $K-M$ is called the matching index and an interval $(c,d)$ such that for all $\alpha\in(c,d)$ we have the same matching exponents is called a matching interval.\end{definition}

\noindent Note that for Nakada's $\alpha$-expansions all rationals have a finite expansion and thus match in $0$ or before. For any rational in the domain a matching interval is found.
For our family all rationals will have an infinite expansion. Since for all $\alpha\in(0,\sqrt{N}-1)$ the expansion is infinite there are no values for which we find matching `trivially'.
Even if we find an $\alpha$ for which matching holds it is hard to conclude it must hold in a neighborhood of $\alpha$ as well since we cannot find an algebraic relation.
The presence of the holes for some $\alpha$ might also lead to problems.
The only reason to believe matching can help us to prove monotonicity is the fact that it works for related families and for specific choices of $N$ and $\alpha$ we can find matching where we can see the function is monotonic.
We will now discuss the $2_\alpha$-expansions in detail. Moreover, we proof that the entropy is constant for $\alpha \in \left( \frac{\sqrt{33}-5}{2}, \sqrt{2}-1\right)$ when $N=2$.  

\subsubsection{The entropy of $2_\alpha$-expansions}
We start with an example for which there is no $\alpha$ such that we have an attractor strictly smaller than the interval. Also, simulation indicates that there seems to be a plateau in the neighborhood of $\sqrt{2}-1$. For this value we can calculate the entropy since we have the density for this specific case of $\alpha$; see Section~\ref{sec:N2asqrt2min1} also see Figure~\ref{fig:entN2} for a plot of the entropy function.
\begin{figure}[h!]
\includegraphics[width=110mm]{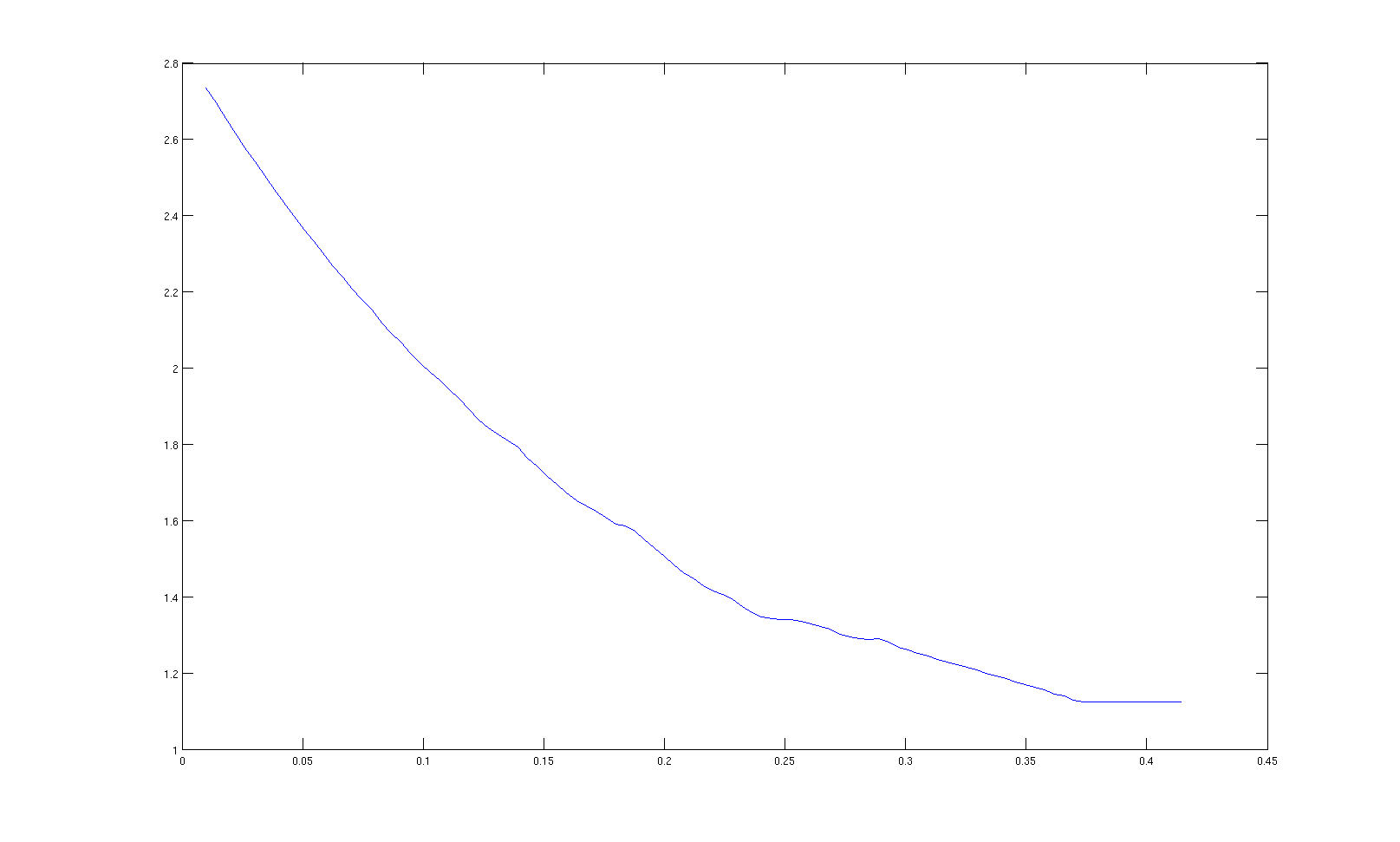}\caption{Entropy as function of $\alpha$ for $N=2$.}\label{fig:entN2}
\end{figure}
When taking a closer look at this plateau we found that on the interval $[\frac{\sqrt{33}-5}{2},\sqrt{2}-1]$ the entropy is constant on this interval. The point $\frac{\sqrt{33}-5}{2}$ is the point for which all smaller $\alpha$ have that there are always 5 or more branches and for all larger $\alpha$ there are always 4 branches.
If we look at a simulation of the natural extension it seems that for these values of $\alpha$ we can construct a natural extension. Indeed this turned out to be the case (see Theorem~\ref{thm:natexonint}).
For $\frac{\sqrt{33}-5}{2}$ we find matching exponents $(0,2)$ and for $\sqrt{2}-1$ we find matching exponents $(0,1)$. Inside the interval itself we find $(3,3)$. As the value of the entropy, These values were first found by simulation, in Theorem~\ref{thm:matching on last interval} we give a proof of this.

\begin{theorem}\label{thm:matching on last interval}
Let $N=2$, and let $\alpha \in \left( \frac{\sqrt{33}-5}{2}, \sqrt{2}-1\right)$. Then $T^3(\alpha ) = T^3(\alpha +1)$.
\end{theorem}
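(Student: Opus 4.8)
The plan is to prove the matching relation by a direct orbit computation, tracking the two endpoints $\alpha$ and $\beta:=\alpha+1$ for two steps of $T=T_{2,\alpha}$ and then exploiting an exact identity at the second iterate that forces the third iterates to coincide. Since every branch of $T$ is the M\"obius map $x\mapsto \tfrac{2}{x}-d$ with digit $d=\lfloor\tfrac{2}{x}-\alpha\rfloor$, each individual step is elementary; the only genuine content is deciding which digit is used at each stage, and keeping track of it uniformly over the whole parameter interval.

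First I would fix the first two digits along each orbit and verify they are constant for every $\alpha\in\left(\tfrac{\sqrt{33}-5}{2},\sqrt2-1\right)$. By the admissibility discussion of Section~\ref{sec:admis}, the left endpoint $\alpha$ carries the largest digit $n_{max}=4$ and the right endpoint $\beta$ the smallest digit $n_{min}=1$ throughout this interval, so $T(\alpha)=\tfrac{2}{\alpha}-4=\tfrac{2-4\alpha}{\alpha}$ and $T(\beta)=\tfrac{1-\alpha}{1+\alpha}$. Computing the next digit reduces to evaluating $\tfrac{2}{T(\alpha)}-\alpha=\tfrac{2\alpha^2}{1-2\alpha}$ and $\tfrac{2}{T(\beta)}-\alpha=\tfrac{2+\alpha+\alpha^2}{1-\alpha}$; using monotonicity of these expressions in $\alpha$ one checks the second digit is $1$ along the $\alpha$-orbit and $4$ along the $\beta$-orbit. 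This yields the closed forms
\[
T^2(\alpha)=\frac{3\alpha-1}{1-2\alpha},\qquad T^2(\beta)=\frac{6\alpha-2}{1-\alpha},
\]
all denominators being positive since $\tfrac13<\alpha<\tfrac12$ on the interval.

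The heart of the argument is that these two points are synchronized. A direct computation gives
\[
\frac{2}{T^2(\beta)}-\frac{2}{T^2(\alpha)}=\frac{(1-\alpha)-(2-4\alpha)}{3\alpha-1}=\frac{3\alpha-1}{3\alpha-1}=1,
\]
so $\tfrac{2}{T^2(\beta)}=\tfrac{2}{T^2(\alpha)}+1$. Hence the third digits differ by exactly one: writing $d_\alpha=\lfloor\tfrac{2}{T^2(\alpha)}-\alpha\rfloor$ and $d_\beta=\lfloor\tfrac{2}{T^2(\beta)}-\alpha\rfloor$, the identity $\lfloor t+1\rfloor=\lfloor t\rfloor+1$ gives $d_\beta=d_\alpha+1$. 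Subtracting the digit in each case, the extra $+1$ in the numerator cancels the extra $+1$ in the digit,
\[
T^3(\beta)=\frac{2}{T^2(\beta)}-d_\beta=\Big(\tfrac{2}{T^2(\alpha)}+1\Big)-(d_\alpha+1)=\frac{2}{T^2(\alpha)}-d_\alpha=T^3(\alpha),
\]
which is precisely the claim.

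I expect the main obstacle to be the digit bookkeeping at the earlier steps rather than this final identity. Concretely, the second digit along the $\beta$-orbit equals $4$ only down to $\alpha=\tfrac{\sqrt{33}-5}{2}$, where $\tfrac{2+\alpha+\alpha^2}{1-\alpha}$ hits the integer $4$ exactly (this uses the relation $\alpha^2+5\alpha-2=0$ at that endpoint), while the second digit along the $\alpha$-orbit equals $1$ only up to $\alpha=\sqrt2-1$, where $\tfrac{2\alpha^2}{1-2\alpha}$ hits $2$. These two boundary equalities are exactly what pins the matching interval down to $\left(\tfrac{\sqrt{33}-5}{2},\sqrt2-1\right)$ and account for the change of matching exponents at its endpoints recorded before the statement, so the delicate point is to confirm that every relevant quantity stays strictly inside its integer window on the open interval, making the digit sequence used to derive the closed forms for $T^2$ valid for all admissible $\alpha$. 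It is worth emphasizing that the third digits $d_\alpha,d_\beta$ themselves need \emph{not} be constant across the interval --- only their difference is --- which is why the synchronization identity $\tfrac{2}{T^2(\beta)}=\tfrac{2}{T^2(\alpha)}+1$, rather than a single fixed digit sequence, is the correct tool here.
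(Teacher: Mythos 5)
Your proof is correct, and its key step is genuinely different from the paper's. Both arguments start identically: one checks that the first two digits along each orbit are constant over the whole parameter interval (4 then 1 for $\alpha$; 1 then 4 for $\alpha+1$), giving $T^2(\alpha)=\frac{3\alpha-1}{1-2\alpha}$ and $T^2(\alpha+1)=\frac{6\alpha-2}{1-\alpha}$, and your monotonicity-plus-endpoint verification that $\frac{2\alpha^2}{1-2\alpha}\in(1,2)$ and $\frac{2+\alpha+\alpha^2}{1-\alpha}\in(4,5)$ on the open interval (with equality precisely at $\sqrt{2}-1$ and $\frac{\sqrt{33}-5}{2}$ respectively) is exactly the check required there. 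The paper then finishes with a three-fold case analysis: it splits the parameter interval into $K_1,K_2,K_3$, locates $T^2(\alpha)$ and $T^2(\alpha+1)$ in the partition $\{I_1,\dots,I_4\}$ in each case (digit pairs $(3,4)$, $(2,3)$ and $(1,2)$ respectively), and verifies $T^3(\alpha)=T^3(\alpha+1)$ by explicit computation three separate times. You replace all of this with the single synchronization identity $\frac{2}{T^2(\alpha+1)}-\frac{2}{T^2(\alpha)}=1$, which together with $\lfloor t+1\rfloor=\lfloor t\rfloor+1$ forces the third digits to differ by exactly one and makes the cancellation happen uniformly in $\alpha$ --- no case distinction, and no need to determine the third digits at all. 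Your route is shorter and exposes the mechanism behind the matching (only the digit \emph{difference} matters); the paper's route yields as a by-product the explicit formulas for $T^3(\alpha)$ on each $K_i$ and the three-case structure, which the paper reuses immediately afterwards: the construction of the natural extension in Theorem~\ref{thm:natexonint} explicitly appeals to the fact that Theorem~\ref{thm:matching on last interval} produces three different cases. So the paper's extra bookkeeping is not wasted in context, but as a proof of the matching statement alone your argument is cleaner and equally rigorous.
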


\begin{proof}
Note that the interval $(\alpha, \alpha +1)$ has as natural partition $\{ I_1,I_2,I_3,I_4\}$, where
$$
I_1= \Big( \frac{2}{\alpha +2},\alpha +1\Big) , \quad I_2= \Big( \frac{2}{\alpha+3},\frac{2}{\alpha+2}\Big] ,
\quad I_3= \Big( \frac{2}{\alpha+4},\frac{2}{\alpha+3}\Big] ,
$$
and
$$
I_4 = \Big[ \alpha, \frac{2}{\alpha+4}\Big] ,
$$
where
$$
T(x) = \frac{2}{x} - d,\quad \text{if $x\in I_d$\,\, for $d=1,2,3,4$}.
$$
An easy calculation shows that
$$
T(\alpha ) = \frac{2-4\alpha}{\alpha}\in I_1,
$$
(and $T(\alpha ) = \frac{2}{\alpha+2}$ when $\alpha = \sqrt{2}-1$, and $T(\alpha ) = \alpha +1$ when $\alpha=\frac{\sqrt{33}-5}{2}$), so that
$$
T^2(\alpha ) = \frac{3\alpha -1}{1-2\alpha}.
$$
Furthermore, we have that
$$
T(\alpha+1) = \frac{1-\alpha}{\alpha+1} \in I_4,
$$
(and $T(\alpha +1) = \sqrt{2}-1$ when $\alpha = \sqrt{2}-1$;  $T(\alpha +1) = \frac{2}{\alpha +4}$ when $\alpha = \frac{\sqrt{33}-5}{2}$), so that
$$
T^2(\alpha+1) = \frac{6\alpha -2}{1-\alpha}.
$$
Now let
$$
 K_1= \left(\frac{\sqrt{33}-5}{2},\frac{\sqrt{51}-6}{3}\right], \quad
 K_2= \left(\frac{\sqrt{51}-6}{3},\frac{\sqrt{129}-9}{6}\right] \\
$$
and
$$
K_3= \left(\frac{\sqrt{129}-9}{6},\sqrt{2}-1\right) \ .
$$
\newline For $\alpha\in K_1$ we have that $T^2(\alpha )\in I_3$ and so
$$
T^3(\alpha ) = \frac{5-13\alpha}{3\alpha -1}
$$
and $T^2(\alpha +1 )\in I_4$ which results in
$$
T^3(\alpha +1) = \frac{5-13\alpha}{3\alpha -1} = T^3(\alpha ) \ .
$$
For $\alpha\in K_2$ we have that  $T^2(\alpha )\in I_2$ and so
$$
T^3(\alpha ) = \frac{4-10\alpha}{3\alpha -1}
$$
and $T^2(\alpha+1)\in I_3$ which results in
$$
T^3(\alpha +1) = \frac{4-10\alpha}{3\alpha -1} = T^3(\alpha ) \ .
$$
For $\alpha\in K_3$ we have that  $T^2(\alpha )\in I_1$ and so
$$
T^3(\alpha) = \frac{3-7\alpha}{3\alpha -1}
$$
and $T^2(\alpha+1)\in I_2$ which results in
$$
T^3(\alpha+1) = \frac{3-7\alpha}{3\alpha -1} = T^3(\alpha ) \ .
$$
\end{proof}

\noindent Earlier we thought we were just lucky finding the natural extension in case $N=2$ and $\alpha = \sqrt{2}-1$. Note that from this natural extension we immediately also have the case $N=2$, $\alpha = \frac{\sqrt{33}-5}{2}$; just ``invert'' the time and exchange the two coordinates in the natural extension we found for $N=2$ and $\alpha = \sqrt{2}-1$. However, from Theorem~\ref{thm:matching on last interval} it immediately follows that we can also `build' the natural extension for every $\alpha \in \left( \frac{\sqrt{33}-5}{2},\sqrt{2}-1\right)$. Clearly, from Theorem~\ref{thm:matching on last interval} we see that we have three different cases.

\begin{theorem}\label{thm:natexonint}
 For $\alpha\in\left( \frac{\sqrt{33}-5}{2}, \sqrt{2}-1\right)$ the natural extension can be build as in Figure~\ref{fig:natexak1}. Moreover the invariant density is given by
 \begin{eqnarray*}
f(x)&=&H \large(\frac{D}{2+Dx}\textbf{1}_{(\alpha,T(\alpha+1))} +\frac{E}{2+Ex}\textbf{1}_{(T(\alpha+1),T^2(\alpha))}+\frac{F}{2+Fx}\textbf{1}_{(T^2(\alpha),\alpha+1)}\\
&-&\frac{A}{2+Ax}\textbf{1}_{(\alpha,T^2(\alpha+1))}-\frac{B}{2+Bx}\textbf{1}_{(T^2(\alpha+1),T(\alpha))}-\frac{C}{2+Cx}\textbf{1}_{(T(\alpha),\alpha+1)} \large) \\
\end{eqnarray*}
with $A=\frac{\sqrt{33}-5}{2}, B=\sqrt{2}-1, C=\frac{\sqrt{33}-3}{6},  D=2\sqrt{2}-2, E=\frac{\sqrt{33}-3}{2}, F=\sqrt{2}$ and $H^{-1}=\log\left(\frac{1}{32}(3+2\sqrt{2})(7+\sqrt{33})(\sqrt{33}-5)^2\right)\approx 0.25$ the normalizing constant.
\end{theorem}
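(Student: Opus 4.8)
The plan is to exhibit a two‑dimensional system $(\Omega,\mathcal{T})$ that is almost everywhere bijective and has $([\alpha,\alpha+1],T)$ as a factor, exactly as in Section~\ref{sec:natex}. The natural extension map is
$$
\mathcal{T}(x,y)=\left(T(x),\frac{2}{d(x)+y}\right),
$$
where $d(x)\in\{1,2,3,4\}$ is the digit of $x$, and I would show that the measure with density $\rho(x,y)=\frac{2}{(2+xy)^2}$ is $\mathcal{T}$‑invariant on $\Omega$; projecting onto the first coordinate then yields $f$, with $H$ fixed by normalization. The invariance is a direct Jacobian check: since $|\det D\mathcal{T}(x,y)|=\frac{4}{x^2(d+y)^2}$ and $2+T(x)\cdot\frac{2}{d+y}=\frac{2(2+xy)}{x(d+y)}$, one verifies $\rho(\mathcal{T}(x,y))\,|\det D\mathcal{T}(x,y)|=\rho(x,y)$, so this step is routine and independent of the precise shape of $\Omega$.

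The real work is constructing $\Omega$. First I would record the branch structure: $I_2$ and $I_3$ are full branches ($T(I_2)=T(I_3)=(\alpha,\alpha+1)$), while $T(I_1)=(T(\alpha+1),\alpha+1)$ and $T(I_4)=(\alpha,T(\alpha))$ are not full. Theorem~\ref{thm:matching on last interval} guarantees $T^3(\alpha)=T^3(\alpha+1)$, so the forward orbits of the two endpoints merge after three steps; consequently the points $\alpha,T(\alpha),T^2(\alpha),\alpha+1,T(\alpha+1),T^2(\alpha+1)$ form a finite partition of the base, and I take $\Omega$ to be the union of vertical fibers $[\ell(x),u(x)]$ whose endpoints are piecewise constant, jumping only at these abscissae. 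The candidate heights are the constants $A,B,C$ (lower) and $D,E,F$ (upper) in the statement. To pin them down I would impose that $\mathcal{T}$ carries $\Omega$ onto itself: since $y\mapsto\frac{2}{d+y}$ is orientation‑reversing, the image of each fiber is again a fiber, and demanding that these images tile $\Omega$ exactly produces a closed algebraic system relating the heights through the maps $y\mapsto\frac{2}{d+y}$ (for instance $\frac{2}{4+A}=C$ and $\frac{2}{1+F}=D$ hold), analogous to the relations $A=\frac{2}{4+C}$, $B=\frac{2}{3+C}$, $C=\frac{2}{1+B}$ found in Section~\ref{sec:N2asqrt2min1}. Solving this system gives the stated values.

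Once $\Omega$ is correct, the projection is a one‑line integral: because $\int_0^h\frac{2}{(2+xy)^2}\,dy=\frac{h}{2+hx}$, writing each fiber as a difference $[0,u(x)]\setminus[0,\ell(x)]$ turns the projected density into $\frac{u(x)}{2+u(x)x}-\frac{\ell(x)}{2+\ell(x)x}$, which is exactly the displayed $f$ once $u$ and $\ell$ are read off the partition (the $D,E,F$ terms coming from the upper boundary, the $A,B,C$ terms from the lower one). The normalizing constant then follows from $\int\frac{h}{2+hx}\,dx=\log(2+hx)$, so that $H^{-1}$ is a sum of logarithms which collapses to the stated closed form.

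The main obstacle is the bijectivity/tiling verification for $\Omega$, and in particular the fact that Theorem~\ref{thm:matching on last interval} splits into the three subcases $K_1,K_2,K_3$ according to whether $T^2(\alpha)$ lands in $I_3,I_2$ or $I_1$ (equivalently $T^2(\alpha+1)\in I_4,I_3,I_2$). These give genuinely different combinatorial shapes of $\Omega$ — the ``three different cases'' — and in each one I would have to check that the fiber images fit together without gaps or overlaps and that the same height system, hence the same $f$, results. Verifying that the heights solving the tiling equations are consistent across all three cases, so that the single formula for $f$ is valid throughout $\left(\frac{\sqrt{33}-5}{2},\sqrt{2}-1\right)$, is the delicate point; the invariance check and the projection are then straightforward.
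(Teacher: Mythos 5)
Your proposal is correct and follows essentially the same route as the paper's own proof: the same map $\mathcal{T}(x,y)=\bigl(T(x),\tfrac{2}{d(x)+y}\bigr)$, a domain $\Omega$ bounded by piecewise constant heights jumping at the orbit points of $\alpha$ and $\alpha+1$ (made finite by Theorem~\ref{thm:matching on last interval}), heights determined by the fiber-tiling equations, projection by integrating out $y$, and the same three-case check over $K_1,K_2,K_3$ that the paper also only sketches. The only notable difference in emphasis is that you spell out the Jacobian invariance of $\tfrac{2}{(2+xy)^2}$ (which the paper imports from~\cite{DKW}), while the paper spells out the one genuinely surprising computation you gloss over, namely that the $\alpha$-dependent terms (e.g.\ $\log(1+\alpha)$) in the sum of logarithms defining $H^{-1}$ cancel, so the normalizing constant is independent of $\alpha$.
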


\begin{proof}

We guessed the shape of the domain of natural extension by studying a simulation. For the map on this domain we used $\mathcal{T}(x,y)=\left(T(x),\frac{2}{d_1(x)+y}\right)$.

\begin{figure}[h]
\import{home/niels/natexak1.pic}{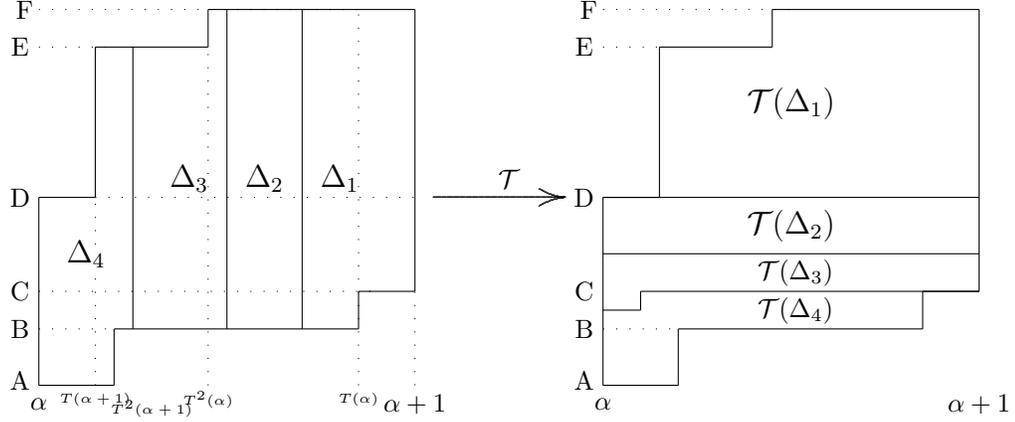}\caption{$\Omega$ and $\mathcal{T}(\Omega)$ with $\alpha\in K_1$.}\label{fig:natexak1}
\end{figure}
\noindent For $\alpha\in K_1$, we find the following equations:
$$
\begin{array}{l c l}
A=\frac{2}{4+E} & \quad & A=\frac{\sqrt{33}-5}{2}\\
B=\frac{2}{4+D} & \quad & B=\sqrt{2}-1\\
C=\frac{2}{3+E} & \quad & C=\frac{\sqrt{33}-3}{6}\\
D=\frac{2}{2+A} & \text{implying that} & D=2\sqrt{2}-2\\
E=\frac{2}{1+C} & \quad & E=\frac{\sqrt{33}-3}{2}\\
F=\frac{2}{1+B} & \quad & F=\sqrt{2} \ .\\
\end{array}
$$
A similar picture emerges for $\alpha\in K_2$ and $\alpha\in K_3$. Moreover, you will find the same set of equations and thus the same heights!
Note that for $\alpha<\frac{2}{5}$ we have $T^2(\alpha)<T(\alpha)$ for $\alpha=\frac{2}{5}$ we have $T^2(\alpha)=T(\alpha)$ and for $\alpha>\frac{2}{5}$ we have $T^2(\alpha)>T(\alpha)$. When you integrate over the second coordinate you find the density given in the statement of the theorem. For the normalizing constant we have the following integral

\begin{eqnarray*}
H&=&\int_{\alpha}^{\alpha+1} \frac{D}{2+Dx}\textbf{1}_{(\alpha,T(\alpha+1))} \ldots -\frac{C}{2+Cx}\textbf{1}_{(T(\alpha),\alpha+1)} \, dx\\
 &=& \log\left(\frac{2+D T(\alpha+1)}{2+D\alpha}\right)+\ldots+\log\left(\frac{2+CT(\alpha)}{2+C(\alpha+1)}\right)\\
 &=& \log\left(\frac{2+D T(\alpha+1)}{2+D\alpha}\cdots\frac{2+CT(\alpha)}{2+C(\alpha+1)}\right)\\
\end{eqnarray*}
It seems that $H$ depends on $\alpha$ but this is not the case. We will calculate one term since all others have a similar calculation.

\begin{eqnarray*}
\log\left(\frac{2+D T(\alpha+1)}{2+D\alpha}\right)&=&\log\left(\frac{2+(2\sqrt{2}-2)T(\alpha+1)}{2+(2\sqrt{2}-2)\alpha}\right)\\
&=&\log\left(\frac{2+(2\sqrt{2}-2)\frac{1-\alpha}{1+\alpha}}{2+(2\sqrt{2}-2)\alpha}\right)\\
&=&\log\left(\frac{2(1+\alpha)+(2\sqrt{2}-2)(1-\alpha)}{2+(2\sqrt{2}-2)\alpha}\right)+\log(1+\alpha)\\
&=&\log\left(\sqrt{2}\frac{\sqrt{2}(1+\alpha)+\sqrt{2}(\sqrt{2}-1)(1-\alpha)}{2+(2\sqrt{2}-2)\alpha}\right)+\log(1+\alpha)\\
&=&\log\left(\sqrt{2}\frac{2+(2\sqrt{2}-2)\alpha}{2+(2\sqrt{2}-2)\alpha}\right)+\log(1+\alpha)\\
&=&\log(\sqrt{2})+\log(1+\alpha)\\
\end{eqnarray*}
When calculating the second term one finds $\frac{1}{4}(\sqrt{33}-5)-\log(1+\alpha)$ and so this cancels the $\log(1+\alpha)$ term.
\end{proof}
\noindent One might hope that when calculating the entropy using Rohlin's formula, terms will cancel as well. These integrals result in $Li_2$ functions depending on $\alpha$ and things are not so easy anymore.
We provide a more elegant proof to show that the entropy is constant on $\left( \frac{\sqrt{33}-5}{2}, \sqrt{2}-1\right)$ and then calculate the entropy for $\alpha=\sqrt{2}-1$. We will use quilting introduced in~\cite{KSS}. Proposition 1 in this article can be formulated (specific to our case) in the following way:
\begin{proposition}\label{prop:quilt}
Let $(\mathcal{T}_{\alpha},\Omega_{\alpha},\mathcal{B}_{\alpha},\mu)$ and $(\mathcal{T}_{\beta},\Omega_{\beta},\mathcal{B}_{\beta},\mu)$ be two dynamical systems as in our setting. Furthermore let $D_1=\Omega_{\alpha}\backslash\Omega_{\beta}$ and $A_1=\Omega_{\alpha}\backslash\Omega_{\beta}$.
If there is a $k\in\mathbb{N}$ such that $\mathcal{T}_{\alpha}^k(D_1)= \mathcal{T}_{\beta}^k(A_1)$ then the dynamical systems are isomorphic.
\end{proposition}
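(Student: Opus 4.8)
The plan is to realise the two natural extensions as a single \emph{quilt}: to build an explicit $\mu$-preserving bijection $\psi\colon\Omega_\alpha\to\Omega_\beta$ (mod $\mu$-null sets) that conjugates $\mathcal{T}_\alpha$ to $\mathcal{T}_\beta$, thereby establishing the measure-theoretic isomorphism. First a reading of the statement: the two displayed sets cannot both equal $\Omega_\alpha\setminus\Omega_\beta$, and the intended meaning is $D_1=\Omega_\alpha\setminus\Omega_\beta$ (the piece we \emph{delete}) and $A_1=\Omega_\beta\setminus\Omega_\alpha$ (the piece we \emph{insert}), so that $\Omega_\beta=(\Omega_\alpha\setminus D_1)\cup A_1$. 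The structural input I would record at the outset is that $\mathcal{T}_\alpha$ and $\mathcal{T}_\beta$ are given by the \emph{same} local formula $\mathcal{T}(x,y)=\bigl(\tfrac{N}{x}-d,\tfrac{N}{d+y}\bigr)$ and both preserve the planar measure $d\mu=(N+xy)^{-2}\,dx\,dy$; consequently they agree on $\Omega_\alpha\cap\Omega_\beta$ wherever the digit $d$ is unchanged, and the entire discrepancy is confined to the symmetric difference $D_1\cup A_1$.

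Next I would build the two transient towers. Put $D^{(i)}=\mathcal{T}_\alpha^{\,i-1}(D_1)$ and $A^{(i)}=\mathcal{T}_\beta^{\,i-1}(A_1)$ for $1\le i\le k$, and write $G=\mathcal{T}_\alpha^{k}(D_1)=\mathcal{T}_\beta^{k}(A_1)$ for their common top, which by hypothesis lies in $\Omega_\alpha\cap\Omega_\beta$. Because $\mathcal{T}_\alpha$ and $\mathcal{T}_\beta$ act bijectively and preserve $\mu$ on these pieces, the set equality at level $k$ forces $\mu(D_1)=\mu(A_1)$, hence $\mu(\Omega_\alpha)=\mu(\Omega_\beta)$; after dividing by the normalising constant $H$, which by Theorem~\ref{thm:natexonint} is independent of the parameter, both become probability systems of equal mass. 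Moreover $\mathcal{T}_\alpha^{k}|_{D_1}\colon D_1\to G$ and $\mathcal{T}_\beta^{k}|_{A_1}\colon A_1\to G$ are bijections mod null, so I may define the base map $b=(\mathcal{T}_\beta^{k}|_{A_1})^{-1}\circ\mathcal{T}_\alpha^{k}|_{D_1}\colon D_1\to A_1$, which preserves $\mu$ and satisfies $\mathcal{T}_\beta^{k}\bigl(b(q)\bigr)=\mathcal{T}_\alpha^{k}(q)$ for every $q\in D_1$.

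With $b$ in hand I would define $\psi$ by rerouting orbits through the inserted tower: set $\psi=\mathrm{id}$ on $\Omega_\alpha\setminus\bigcup_{i=1}^{k}D^{(i)}$, and on each level put $\psi=\mathcal{T}_\beta^{\,i-1}\circ b\circ\mathcal{T}_\alpha^{-(i-1)}\colon D^{(i)}\to A^{(i)}$. A point that under $\mathcal{T}_\alpha$ would climb $D_1,\dots,D^{(k)}$ and then enter $G$ is thus sent up $A_1,\dots,A^{(k)}$ and enters the \emph{same} point of $G$. The conjugacy $\psi\circ\mathcal{T}_\alpha=\mathcal{T}_\beta\circ\psi$ then holds a.e.: off the towers it is the agreement of the two maps on the common region; between consecutive levels it is the one-step shift up the tower; and at the top it is exactly the relation $\mathcal{T}_\beta^{k}\circ b=\mathcal{T}_\alpha^{k}$ built into $b$. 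Since $b$, $\mathcal{T}_\alpha$ and $\mathcal{T}_\beta$ all preserve $\mu$, the map $\psi$ is $\mu$-preserving, and its image lies in $\Omega_\beta$ because the identity part lands in $\Omega_\alpha\cap\Omega_\beta$ and the tower part in $\bigcup_i A^{(i)}\subseteq\Omega_\beta$.

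The step I expect to be the genuine obstacle is proving that $\psi$ is \emph{injective mod null}, equivalently that the inserted tower fills precisely the region vacated by the deleted one. This amounts to showing that $D^{(1)},\dots,D^{(k)}$ are pairwise disjoint, that the same holds for $A^{(1)},\dots,A^{(k)}$, and that $\bigcup_{i=2}^{k}A^{(i)}$ coincides mod null with $\bigcup_{i=2}^{k}D^{(i)}$ inside $\Omega_\alpha\cap\Omega_\beta$, so that no point of $\Omega_\beta$ receives two preimages and none is missed. The measures already match level by level, but the \emph{set}-level fit is the quilting condition proper and is not formal: it must be read off from the explicit geometry of the domains (for $N=2$, the three cases $K_1,K_2,K_3$ and the height equations of Theorem~\ref{thm:natexonint}), using the value of $k$ and the fact that $D_1$ and $A_1$ sit over disjoint $x$-ranges at the two ends of the base interval. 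Once this transience-and-tiling is secured, $\psi$ is a measure isomorphism between equal-mass probability systems whose image has full measure in $\Omega_\beta$, and together with the conjugacy verified above this gives the asserted isomorphism, exactly as in the quilting result of~\cite{KSS}.
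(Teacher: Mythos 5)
The paper offers no proof of Proposition~\ref{prop:quilt} at all: it is stated as a case-specific reformulation of Proposition 1 of~\cite{KSS}, and the proof is delegated to that reference. So your attempt can only be measured against the quilting argument it is meant to reproduce. Its architecture --- transient towers $D^{(i)}=\mathcal{T}_\alpha^{\,i-1}(D_1)$ and $A^{(i)}=\mathcal{T}_\beta^{\,i-1}(A_1)$, a base identification $b$ through the common top, and an explicit rerouting map $\psi$ --- is indeed the skeleton of quilting, and you correctly repair the statement's typo $A_1=\Omega_\beta\setminus\Omega_\alpha$.

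There is, however, a genuine gap, and it sits exactly where you declare the verification to be easy rather than where you locate the obstacle. You assert that the discrepancy between $\mathcal{T}_\alpha$ and $\mathcal{T}_\beta$ is ``confined to the symmetric difference,'' and hence that the conjugacy $\psi\circ\mathcal{T}_\alpha=\mathcal{T}_\beta\circ\psi$ holds off the towers because the maps agree there. This is false: the maps also disagree on the positive-measure set of points $p\in\Omega_\alpha\cap\Omega_\beta$ whose image enters the symmetric difference. If $\mathcal{T}_\alpha(p)\in D_1$, then the digit $T_\beta$ assigns at $p$ is one less than the digit $T_\alpha$ assigns, and $\mathcal{T}_\beta(p)\in A_1$, so $\mathcal{T}_\beta(p)\neq\mathcal{T}_\alpha(p)$. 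Such $p$ lie off the towers (in the transient situation the towers never map back into $D_1$), so you set $\psi(p)=p$, and the conjugacy at $p$ demands $b\bigl(\mathcal{T}_\alpha(p)\bigr)=\mathcal{T}_\beta(p)$; unwinding your definition of $b$, this is the pointwise orbit-merging statement $\mathcal{T}_\alpha^{\,k+1}(p)=\mathcal{T}_\beta^{\,k+1}(p)$. That is strictly stronger than the hypothesis $\mathcal{T}_\alpha^{k}(D_1)=\mathcal{T}_\beta^{k}(A_1)$, which is only an equality of \emph{sets}: a set bijection at the top of the towers does not know that the point $\mathcal{T}_\alpha$ routes through $D_1,\dots,D^{(k)}$ and the point $\mathcal{T}_\beta$ routes through $A_1,\dots,A^{(k)}$ arrive at the \emph{same} place. (Defining $b$ instead by the entry correspondence $b(\mathcal{T}_\alpha(p)):=\mathcal{T}_\beta(p)$ merely moves the same unproved identity to the top of the tower.) That the set-level hypothesis cannot formally yield the conclusion is clear from the degenerate case $\Omega_\alpha=\Omega_\beta$: then $D_1=A_1=\emptyset$ and the hypothesis holds vacuously for every $k$, while two distinct maps of the same domain need not be isomorphic. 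Any correct proof must therefore invoke the specific M\"obius structure of the family --- the rerouted branch differs by a digit shift of one, and one shows that individual orbits genuinely re-merge after $k$ steps, which is where the matching relation of Theorem~\ref{thm:matching on last interval} and the explicit geometry of Theorem~\ref{thm:natexonint} enter in the paper's application. This pointwise merging is the real content of quilting in~\cite{KSS} and is absent from your argument; on top of that, the disjointness/tiling needed for injectivity of $\psi$ is flagged by you but also left unproven. As it stands, the proposal is incomplete on the central dynamical point.
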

\noindent Since isomorphic systems have the same entropy it will give us the following corollary.
\begin{corollary}
For $N=2$ the entropy function is constant on $\left( \frac{\sqrt{33}-5}{2}, \sqrt{2}-1\right)$ and the value is approximately $1.14$.
\end{corollary}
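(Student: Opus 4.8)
The plan is to split the statement into two independent claims: that the entropy is \emph{constant} on the interval, and that its common value is approximately $1.14$. For the first claim I would not try to evaluate the entropy integral for a general $\alpha$ (the discussion above already warns that Rohlin's formula produces $Li_2$ terms that do not obviously cancel); instead I would exhibit, for any two parameters $\alpha,\beta$ in the interval, a measure-preserving isomorphism between the two natural extensions by means of the quilting technique of Proposition~\ref{prop:quilt}, and then invoke the fact that Kolmogorov--Sinai entropy is an isomorphism invariant together with the equality of the entropies of a system and of its natural extension. For the second claim I would compute the entropy at the single convenient parameter $\alpha=\sqrt2-1$, where the invariant density is known explicitly from Section~\ref{sec:N2asqrt2min1}.

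First I would set up the quilting. Fix $\alpha<\beta$ in $\left(\tfrac{\sqrt{33}-5}{2},\sqrt2-1\right)$. By Theorem~\ref{thm:natexonint} the domains $\Omega_\alpha$ and $\Omega_\beta$ are staircase regions built from the \emph{same} six heights $A,B,C,D,E,F$; they differ only in the horizontal positions of their boundaries, namely the ends $\alpha,\alpha+1$ versus $\beta,\beta+1$ and the internal breakpoints $T(\alpha+1),T^2(\alpha),T^2(\alpha+1),T(\alpha)$ versus their $\beta$-counterparts. Consequently the sets $D_1=\Omega_\alpha\setminus\Omega_\beta$ and $A_1=\Omega_\beta\setminus\Omega_\alpha$ consist of finitely many thin rectangles: a strip over $(\alpha,\beta)$ on the left, a strip over $(\alpha+1,\beta+1)$ on the right, and slivers over the overlap where a breakpoint has crossed. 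The decisive input is the matching identity $T^3(\alpha)=T^3(\alpha+1)$ of Theorem~\ref{thm:matching on last interval}, with matching exponents $(3,3)$: it forces the forward orbits of the two endpoints to coalesce after exactly three steps, which is precisely what is needed to verify $\mathcal{T}_\alpha^{3}(D_1)=\mathcal{T}_\beta^{3}(A_1)$. Once this equality holds, Proposition~\ref{prop:quilt} yields that $(\mathcal{T}_\alpha,\Omega_\alpha,\mu)$ and $(\mathcal{T}_\beta,\Omega_\beta,\mu)$ are isomorphic and hence have equal entropy; since $\alpha,\beta$ were arbitrary, the entropy is constant on the interval. The $\alpha$-independence of the normalizing constant $H$ already observed in Theorem~\ref{thm:natexonint} is a reassuring shadow of this isomorphism.

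To obtain the numerical value I would evaluate the entropy at $\alpha=\sqrt2-1$ through Rohlin's formula $h=\int_{\alpha}^{\alpha+1}\log|T'(x)|\,f(x)\,dx$, where $|T'(x)|=2/x^2$ and $f$ is the explicit invariant density of Section~\ref{sec:N2asqrt2min1}. Writing $\log|T'(x)|=\log 2-2\log x$ splits the computation into an elementary part plus integrals of $\log x$ against the M\"obius-type density terms $\tfrac{G}{2+Gx}$; these are exactly the integrals that evaluate to dilogarithms, and carrying them out numerically gives $h\approx 1.14$. Because the entropy is constant on the interval, this single value is the value everywhere on $\left(\tfrac{\sqrt{33}-5}{2},\sqrt2-1\right)$.

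The main obstacle I anticipate is the geometric bookkeeping in the quilting step: precisely describing the rectangles that make up $D_1$ and $A_1$ (including the degenerate configurations, such as the ordering change of $T(\alpha)$ and $T^2(\alpha)$ at $\alpha=\tfrac25$ and the three sub-cases $K_1,K_2,K_3$ from Theorem~\ref{thm:matching on last interval}) and then transporting each piece through three applications of $\mathcal{T}_\alpha$ and $\mathcal{T}_\beta$ to confirm that the images coincide up to a null set. Establishing $\mathcal{T}_\alpha^{3}(D_1)=\mathcal{T}_\beta^{3}(A_1)$ rigorously, rather than merely reading it off the simulated pictures, is where the real work lies: the matching relation only guarantees that the endpoint orbits meet, so one must still check that the full rectangles, together with their heights, are carried over consistently.
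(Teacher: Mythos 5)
Your proposal follows essentially the same route as the paper: constancy via the quilting Proposition~\ref{prop:quilt} with $k=3$, using the matching identity $T^3(\alpha)=T^3(\alpha+1)$ of Theorem~\ref{thm:matching on last interval} and the heights of Theorem~\ref{thm:natexonint}, and then the numerical value via Rohlin's formula at $\alpha=\sqrt{2}-1$ with the explicit density, yielding the dilogarithm integrals and $h\approx 1.14$. The verification you flag as the main remaining obstacle is in the paper a short explicit computation: tracking the strips $D_1=[\alpha,\beta]\times[A,D]$ and $A_1=[\alpha+1,\beta+1]\times[C,F]$ for three steps, the first coordinates merge by matching and the second coordinates merge because $3+F=4+B$ and $3+E=4+A$ (i.e.\ $F=B+1$, $E=A+1$), giving $D_4=A_4$.
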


\begin{proof}
We show that for $k=3$ we satisfy the condition in Proposition~\ref{prop:quilt}. Define $D_i=\mathcal{T}_{\alpha}^{i-1}(D_1)$ and $A_i=\mathcal{T}_{\beta}^{i-1}(A_1)$ for $k=1,2,3,4$.
We find the following regions (see Figure~\ref{fig:quilt}):
\begin{eqnarray*}
 D_1 &=& [\alpha,\beta]\times[A,D],\\
 D_2 &=& [T(\beta),T(\alpha)]\times[B,C],\\
 D_3 &=& [T^2(\beta),T^2(\alpha)]\times[E,F],\\
 D_4 &=& [T^3(\beta),T^3(\alpha)]\times\left[\frac{2}{3+F},\frac{2}{3+E}\right],\\
 A_1 &=& [\alpha+1,\beta+1]\times[C,F],\\
 A_2 &=& [T(\beta+1),T(\alpha+1)]\times[D,E],\\
 A_3 &=& [T^2(\beta+1),T^2(\alpha+1)]\times[A,B],\\
 A_4 &=& [T^3(\beta+1),T^3(\alpha+1)]\times\left[\frac{2}{4+B},\frac{2}{4+A}\right].\\ \end{eqnarray*}
Note that since we have matching $[T^3(\beta),T^3(\alpha)]=[T^3(\beta+1),T^3(\alpha+1)]$. Now
$$
\frac{2}{3+F}=\frac{2}{3+\sqrt{2}}= \frac{2}{4+B}=\frac{2}{4+\sqrt{2}-1} \ ,
$$
$$
\frac{2}{3+E}=\frac{2}{3+\frac{\sqrt{33}-3}{2}}= \frac{2}{4+A}=\frac{2}{4+\frac{\sqrt{33}-5}{2}}
$$
and so we find $D_4=A_4$.
\begin{figure}[h]
\import{home/niels/natexquilt.pic}{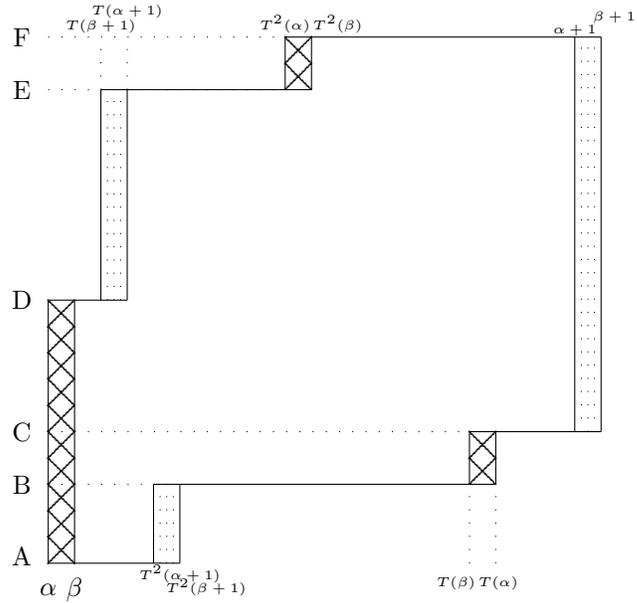}\caption{Illustration of the quilting.}\label{fig:quilt}
\end{figure}

\noindent For the value of the entropy we use Rohlin's formula for $\alpha=\sqrt{2}-1$ (see~\cite{DK1,S});
\begin{eqnarray*}
h(T_{\sqrt{2}-1})&=&\int_{\sqrt{2}-1}^{\sqrt{2}}\log|T^{\prime}(x)|f(x)\, dx\\
&=&H\int_{\sqrt{2}-1}^{\sqrt{2}}\left(\log(2)-2\log(x)\right)f(x)\, dx\\
&=&\log(2)-2H\int_{\sqrt{2}-1}^{\sqrt{2}}\log(x)f(x)\, dx\\
&=&\log(2)-2H\int_{\sqrt{2}-1}^{\sqrt{2}}\log(x)\large( (\frac{\sqrt{33}-3}{4+(\sqrt{33}-3)x}-\frac{\sqrt{33}-5}{4+(\sqrt{33}-5)x})1_{\sqrt{2}-1,2(\sqrt{2}-1)}\\
& & \, +(\frac{\sqrt{33}-3}{4+(\sqrt{33}-3)x}-\frac{\sqrt{33}-3}{12+(\sqrt{33}-3)x})1_{2(\sqrt{2}-1),\sqrt{2}}\large)\, dx\\
&=&\log(2)-2H\large( (Li_2(-\frac{x(\sqrt{33}-3)}{4})+\log(\frac{x(\sqrt{33}-3)}{4}+1)\\
& &\, -Li_2(-\frac{x(\sqrt{33}-5)}{4})+\log(\frac{x(\sqrt{33}-5)}{4}+1))\large|_{\sqrt{2}-1}^{2(\sqrt{2}-1)}\\
& & \, + (Li_2(-\frac{x(\sqrt{33}-3)}{4})+\log(\frac{x(\sqrt{33}-3)}{4}+1)\\
& &\, -Li_2(-\frac{x(\sqrt{33}-5)}{12})+\log(\frac{x(\sqrt{33}-5)}{12}+1))\large|_{2(\sqrt{2}-1)}^{\sqrt{2}}\large)\\
&\approx&1.14 \ .
\end{eqnarray*}
\end{proof}
\noindent By looking at the graph displayed in Figure~\ref{fig:entN2} we can't really find other matching exponents easily. To check for other matching exponents we can do the following.  Suppose we are interested in finding a matching interval with exponents $(n_1,n_2)$.
 We select a large number random points (say $10\,000$) from $(0,\sqrt{N}-1)$. Then we looked at $T_{\alpha}^{n_1}(\alpha)-T_{\alpha}^{n_2}(\alpha+1)$ for these random points and we checked to see whether it is very close to 0. Note that if an interval was found this way with matching exponents $(n_1,n_2)$ then we also find that interval for $(n_1+1,n_2+1)$.
Table \ref{tab:cases2} shows which matching exponents we found.
\begin{table}[h!]
\centering
\begin{tabular}{ c |c | c | c |c|c|c|c|c|c|c}
 M\textbackslash K & 1 & 2 & 3 & 4 & 5 & 6 & 7 & 8 & 9 & 10 	\\
\hline
1 & 0 & 0 & 1 & 0 & 1 & 0 & 0 & 0 & 0 & 0 \\
2 & 0 & 0 & 0 & 1 & 0 & 1 & 0 & 0 & 0 & 0 \\
3 & 0 & 0 & 1 & 0 & 1 & 0 & 1 & 0 & 1 & 0 \\
4 & 0 & 0 & 0 & 1 & 0 & 1 & 0 & 1 & 0 & 1 \\
5 & 0 & 0 & 0 & 0 & 1 & 0 & 1 & 0 & 1 & 0 \\
6 & 0 & 0 & 0 & 0 & 0 & 1 & 0 & 1 & 0 & 1 \\
7 & 0 & 0 & 0 & 0 & 0 & 0 & 1 & 0 & 1 & 0 \\
8 & 0 & 0 & 0 & 0 & 0 & 0 & 0 & 1 & 0 & 1 \\
9 & 0 & 0 & 0 & 0 & 0 & 0 & 0 & 0 & 1 & 0 \\
10 & 0 & 0 & 0 & 0 & 0 & 0 & 0 & 1 & 0 & 1 \\
\end{tabular}
\caption{observed matching exponents for $N=2$: 1 if seen, 0 if not.}\label{tab:cases2}
\end{table}
This is very different from Nakada's $\alpha$-continued fractions where you can find all possible matching exponents. The fact that we did not observe them does not mean they are not there. Maybe they are too small to observe using this method.

\subsubsection{The entropy of $36_\alpha$-expansions}
For $N\geq 9$ we expect different behavior due to the fact that for some $\alpha$ there is at least one subinterval on which the invariant measure is zero.
If we pick $N=36$ we have a map with only full branches for $\alpha=1,2,3$.
Figure~\ref{fig:ent36} shows the entropy as function of $\alpha$. The stars indicate those values which we could calculate theoretically.
\begin{figure}[h!]
 \includegraphics[width=110mm]{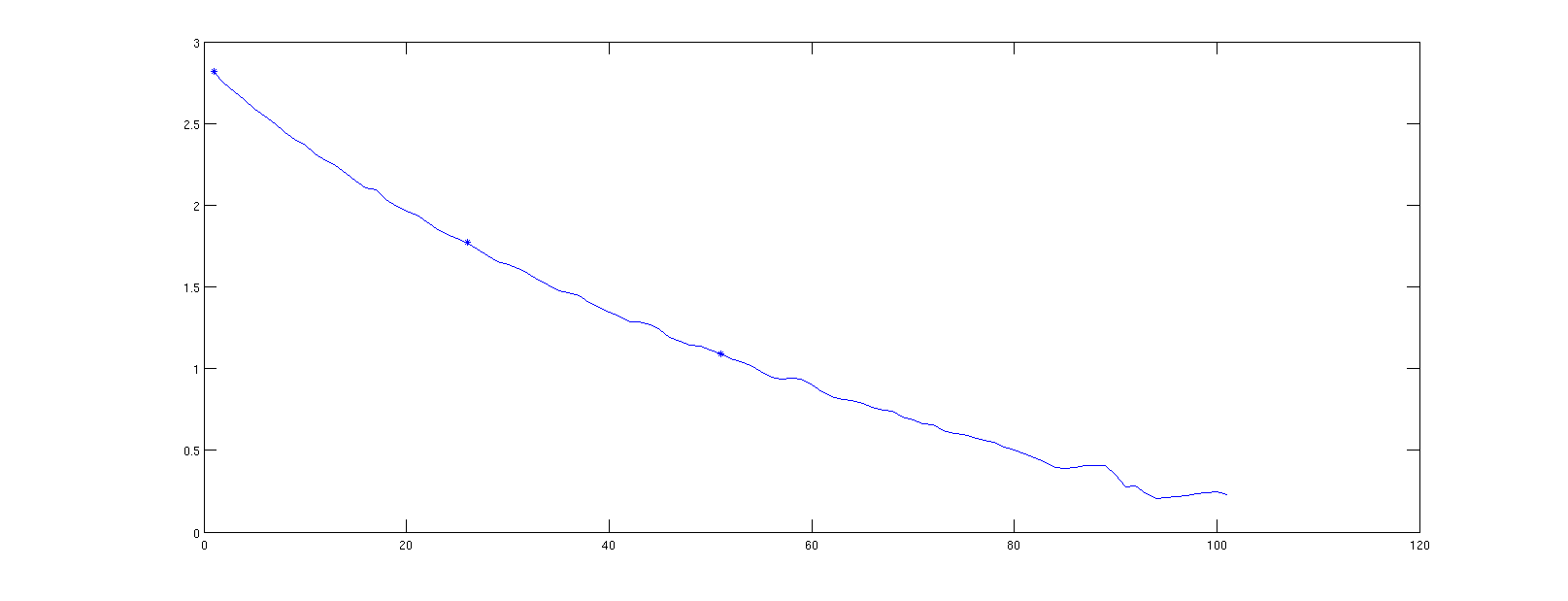}\caption{Entropy as function of $a$ for $N=36$.}\label{fig:ent36}
\end{figure}

\noindent Clearly, we can observe plateaus but if we look at the matching exponents we can observe that for all $M,K\leq 10$ the only matching exponents we find are $(3,3),(4,4),\ldots(10,10)$.

\section{Conclusion}
We have seen that the general form of the examples given yields a rather large family. In some examples we were able to construct the natural extension and therefore to find the invariant measure. In other examples this was not the case. There does not seem to be an easy rule which tells you when the method will work and when it does not.
The subfamily of the $N$-expansions we studied is not new, but it has not been studied in this detail with finitely many digits. Note that having the Gauss-Kuzmin-L\'evy method for approximating the densities allowed us to study the entropy much easier due to computation time. We have seen that matching is helpful to prove monotonicity even though we did not mimic the proof for $\alpha$-expansions.
Motivated by similar results in the case of Nakada's $\alpha$-expansions the  following questions about entropy arise:
\begin{itemize}[noitemsep]
 \item For every $N\in\mathbb{N}_{\geq 2}$ is there an interval in $(0,\sqrt{N}-1)$ for which the entropy function is constant?
 \item For a fixed $N \in\mathbb{N}_{\geq 2}$ for which $\alpha\in (0,\sqrt{N}-1)$ do we have matching?
 \item Does matching holds on an open dense set? Does matching hold almost everywhere?
 \item What is the influence of an attractor strictly smaller than the interval $[\alpha, \alpha+1]$ on the entropy?
\end{itemize}

\bibliography{mybib}

\begin{thebibliography}{10}
\expandafter\ifx\csname url\endcsname\relax
  \def\url#1{\texttt{#1}}\fi
\expandafter\ifx\csname urlprefix\endcsname\relax\def\urlprefix{URL }\fi
\expandafter\ifx\csname href\endcsname\relax
  \def\href#1#2{#2} \def\path#1{#1}\fi

\bibitem{SW}
F.~Schweiger, M.~Waterman, Some remarks on {K}uzmin's theorem for
  {$F$}-expansions, J. Number Theory 5 (1973) 123--131.

\bibitem{DK1}
K.~Dajani, C.~Kraaikamp, Ergodic theory of numbers, Vol.~29 of Carus
  Mathematical Monographs, Mathematical Association of America, Washington, DC,
  2002.

\bibitem{IK}
M.~Iosifescu, C.~Kraaikamp, Metrical theory of continued fractions, Vol. 547 of
  Mathematics and its Applications, Kluwer Academic Publishers, Dordrecht,
  2002.

\bibitem{S}
F.~Schweiger, Continued Fractions and their Generalizations: A short history of
  $f$-expansions, docentpress, 2016.

\bibitem{Le}
J.~Lehner, Semiregular continued fractions whose partial denominators are {$1$}
  or {$2$}, in: The mathematical legacy of {W}ilhelm {M}agnus: groups, geometry
  and special functions ({B}rooklyn, {NY}, 1992), Vol. 169 of Contemp. Math.,
  Amer. Math. Soc., Providence, RI, 1994, pp. 407--410.

\bibitem{DK2}
K.~Dajani, C.~Kraaikamp, ``{T}he mother of all continued fractions'', Colloq.
  Math. 84/85~(part 1) (2000) 109--123, dedicated to the memory of Anzelm
  Iwanik.

\bibitem{DKW}
K.~Dajani, C.~Kraaikamp, N.~van~der Wekken, Ergodicity of {$N$}-continued
  fraction expansions, J. Number Theory 133~(9) (2013) 3183--3204.

\bibitem{AW}
M.~Anselm, S.~H. Weintraub, A generalization of continued fractions, J. Number
  Theory 131~(12) (2011) 2442--2460.

\bibitem{BGK}
E.~B. Burger, J.~Gell-Redman, R.~Kravitz, D.~Walton, N.~Yates, Shrinking the
  period lengths of continued fractions while still capturing convergents, J.
  Number Theory 128~(1) (2008) 144--153.

\bibitem{L14}
N.~D.~S. Langeveld, \href{repository.tudelft.nl}{Finding infintely many even or
  odd continued fractions by introducing a new family of maps}, Master's thesis
  (2015).
\newline\urlprefix\url{repository.tudelft.nl}

\bibitem{DHMK}
K.~Dajani, D.~Hensley, C.~Kraaikamp, V.~Masarotto, Arithmetic and ergodic
  properties of `flipped' continued fraction algorithms, Acta Arith. 153~(1)
  (2012) 51--79.

\bibitem{C1}
G.~H. Choe, Generalized continued fractions, Appl. Math. Comput. 109~(2-3)
  (2000) 287--299.

\bibitem{La}
D.~Lascu, Dependence with complete connections and the gauss-kuzmin theorem for
  $n$-continued fractions (2015).

\bibitem{C3}
G.~H. Choe, Computational ergodic theory, Vol.~13 of Algorithms and Computation
  in Mathematics, Springer-Verlag, Berlin, 2005.

\bibitem{C2}
G.~H. Choe, C.~Kim, The {K}hintchine constants for generalized continued
  fractions, Appl. Math. Comput. 144~(2-3) (2003) 397--411.

\bibitem{BCIT}
C.~Bonanno, C.~Carminati, S.~Isola, G.~Tiozzo, Dynamics of continued fractions
  and kneading sequences of unimodal maps, Discrete Contin. Dyn. Syst. 33~(4)
  (2013) 1313--1332.

\bibitem{CMPT}
C.~Carminati, S.~Marmi, A.~Profeti, G.~Tiozzo, The entropy of
  {$\alpha$}-continued fractions: numerical results, Nonlinearity 23~(10)
  (2010) 2429--2456.

\bibitem{CT12}
C.~Carminati, G.~Tiozzo, A canonical thickening of {$\Bbb Q$} and the entropy
  of {$\alpha$}-continued fraction transformations, Ergodic Theory Dynam.
  Systems 32~(4) (2012) 1249--1269.

\bibitem{CT13}
C.~Carminati, G.~Tiozzo, Tuning and plateaux for the entropy of
  {$\alpha$}-continued fractions, Nonlinearity 26~(4) (2013) 1049--1070.

\bibitem{KSS2}
C.~Kraaikamp, T.~A. Schmidt, W.~Steiner, Natural extensions and entropy of
  {$\alpha$}-continued fractions, Nonlinearity 25~(8) (2012) 2207--2243.

\bibitem{LM}
L.~Luzzi, S.~Marmi, On the entropy of {J}apanese continued fractions, Discrete
  Contin. Dyn. Syst. 20~(3) (2008) 673--711.

\bibitem{NN}
H.~Nakada, R.~Natsui, The non-monotonicity of the entropy of
  {$\alpha$}-continued fraction transformations, Nonlinearity 21~(6) (2008)
  1207--1225.

\bibitem{KU0}
S.~Katok, I.~Ugarcovici, Theory of {$(a,b)$}-continued fraction transformations
  and applications, Electron. Res. Announc. Math. Sci. 17 (2010) 20--33.

\bibitem{KU1}
S.~Katok, I.~Ugarcovici, Structure of attractors for {$(a,b)$}-continued
  fraction transformations, J. Mod. Dyn. 4~(4) (2010) 637--691.

\bibitem{KU2}
S.~Katok, I.~Ugarcovici, Applications of {$(a,b)$}-continued fraction
  transformations, Ergodic Theory Dynam. Systems 32~(2) (2012) 755--777.

\bibitem{CIT}
C.~Carminati, S.~Isola, G.~Tiozzo, Continued fractions with
  $sl(2,\mathbb{Z})$-branches: combinatorics and entropy (2013).

\bibitem{K}
A.~Khintchine, Kettenbr\"uche, B. G. Teubner Verlagsgesellschaft, Leipzig,
  1956.

\bibitem{DKL}
K.~Dajani, C.~Kraaikamp, N.~D.~S. Langeveld, Continued fraction expansions with
  variable numerators, Ramanujan J. 37~(3) (2015) 617--639.

\bibitem{KSS}
C.~Kraaikamp, T.~A. Schmidt, I.~Smeets, Natural extensions for
  {$\alpha$}-{R}osen continued fractions, J. Math. Soc. Japan 62~(2) (2010)
  649--671.

\end{thebibliography}
\bibliographystyle{elsarticle-num}
 
\end{document}